\documentclass{amsart}
\usepackage[left=1.25in, right=1.25in]{geometry}
\usepackage{cite}
\usepackage{amsthm,amsmath,amssymb,amsfonts}
\usepackage{algorithmic}
\usepackage{graphicx}
\usepackage{textcomp}
\def\BibTeX{{\rm B\kern-.05em{\sc i\kern-.025em b}\kern-.08em
    T\kern-.1667em\lower.7ex\hbox{E}\kern-.125emX}}
\usepackage{bm}
\usepackage{mathrsfs}
\usepackage{xcolor}
\usepackage{tikz}
\usepackage{hyperref}
\hypersetup{
	colorlinks,
	linkcolor={red!50!black},
	citecolor={blue!70!black},
	urlcolor={blue!80!black}
}
\usepackage{cleveref}
\usepackage{commath}
\usepackage{empheq}
\usepackage{tikz}
\def\checkmark{\tikz\fill[scale=0.4](0,.35) -- (.25,0) -- (1,.7) -- (.25,.15) -- cycle;}
\usepackage{enumitem}
\setlist{  
	listparindent=\parindent,
	parsep=0pt,
}
\usepackage[most]{tcolorbox}

\newcommand{\highlight}[1]{%
	\colorbox{black!5!white}{$\displaystyle#1$}}

\newtcolorbox{whiteFrameResultBeforeAfter}{
	colframe=black!20!white,
	%colback =cyan!20!white,
	top=2mm, bottom=3 mm, left=0mm, right=0mm,
	arc=0mm,
	%
	%fontupper=\color{blue!70!black},
	%fonttitle=\bfseries\color{black!70!black},
	%title= \center Transmission Problem
}

\newtheorem{assumption}{Assumption}
\newtheorem{theorem}{Theorem}
\newtheorem{corollary}{Corollary}
\newtheorem{lemma}{Lemma}
\newtheorem{remark}{Remark}
\newtheorem{proposition}{Proposition}

\makeatletter
\DeclareFontFamily{OMX}{MnSymbolE}{}
\DeclareSymbolFont{MnLargeSymbols}{OMX}{MnSymbolE}{m}{n}
\SetSymbolFont{MnLargeSymbols}{bold}{OMX}{MnSymbolE}{b}{n}
\DeclareFontShape{OMX}{MnSymbolE}{m}{n}{
    <-6>  MnSymbolE5
   <6-7>  MnSymbolE6
   <7-8>  MnSymbolE7
   <8-9>  MnSymbolE8
   <9-10> MnSymbolE9
  <10-12> MnSymbolE10
  <12->   MnSymbolE12
}{}
\DeclareFontShape{OMX}{MnSymbolE}{b}{n}{
    <-6>  MnSymbolE-Bold5
   <6-7>  MnSymbolE-Bold6
   <7-8>  MnSymbolE-Bold7
   <8-9>  MnSymbolE-Bold8
   <9-10> MnSymbolE-Bold9
  <10-12> MnSymbolE-Bold10
  <12->   MnSymbolE-Bold12
}{}

\let\llangle\@undefined
\let\rrangle\@undefined
\DeclareMathDelimiter{\llangle}{\mathopen}%
                     {MnLargeSymbols}{'164}{MnLargeSymbols}{'164}
\DeclareMathDelimiter{\rrangle}{\mathclose}%
                     {MnLargeSymbols}{'171}{MnLargeSymbols}{'171}
\makeatother

\newcommand{\id}{\text{Id}}
\newcommand{\SL}{\mathcal{SL}_{\lambda}}
\newcommand{\DL}{\mathcal{DL}_{\lambda}}
\newcommand{\TD}{\mathsf{T}_D}
\newcommand{\TN}{\mathsf{T}_N}
\newcommand{\TspaceN}{\mathbf{H}_N}
\newcommand{\TspaceD}{\mathbf{H}_D}
\newcommand{\domX}[1]{\mathbf{X}\left(#1\right)}
\newcommand{\domXlocD}[1]{\mathbf{X}_{\text{loc}}^D\left(#1\right)}
\newcommand{\domXlocN}[1]{\mathbf{X}_{\text{loc}}^N\left(#1\right)}
\newcommand{\domXloc}[1]{\mathbf{X}_{\text{loc}}\left(#1\right)}
\newcommand{\bra}[1]{\left(#1\right)}
\newcommand{\bLtwo}{\mathbf{L}^2}
\newcommand{\bLtwoloc}{\mathbf{L}^2_{\text{loc}}}

\newcommand{\dpair}[2]{\llangle #1 , #2 \rrangle}

\newcount\colveccount
\newcommand*\colvec[1]{
	\global\colveccount#1
	\begin{pmatrix}
		\colvecnext
	}
	\def\colvecnext#1{
		#1
		\global\advance\colveccount-1
		\ifnum\colveccount>0
		\\
		\expandafter\colvecnext
		\else
	\end{pmatrix}
	\fi
}

\begin{document}
\title[Spurious Resonances in Electromagnetism and Acoustics]{Spurious Resonances in Coupled Domain--Boundary Variational Formulations of Transmission Problems in Electromagnetism and Acoustics}
\author{Erick Schulz and Ralf Hiptmair}
\date{March 26, 2020}
\thanks{\textbf{Funding.} The work of Erick Schulz was supported by SNF as part of the grant 200021\_184848/1}
\thanks{Seminar in Applied Mathematics, Eidgenössische Technische Hochschule Zurich (ETH), Switzerland (e-mails: \href{mailto:erick.schulz@sam.math.ethz.ch}{erick.schulz@sam.math.ethz.ch}, \href{mailto:hiptmair@sam.math.ethz.ch}{hiptmair@sam.math.ethz.ch})}

\maketitle

\begin{abstract}
We develop a framework shedding light on common features of coupled variational formulations arising in electromagnetic scattering and acoustics. We show that spurious resonances haunting coupled domain-boundary formulations based on direct boundary integral equations of the first kind originate from the formal structure of their Calder\'on identities. Using this observation, the kernel of the coupled problem is characterized explicitly and we show that it completely vanishes under the exterior representation formula.
\end{abstract}
\smallskip\noindent
\small{\textbf{Keywords.} electromagnetic scattering, acoustic scattering, resonant frequencies, coupling}

\section{Introduction}
Transmission problems in electromagnetism and acoustics model the following typical experiment. An incident wave penetrates an object and travels inside the possibly inhomogeneous medium. Concurrently, it also scatters at its surface and propagates in the outside homogeneous region to eventually decay at infinity. Simulation of the complete phenomenon entails coupling the interior and exterior problems. A vast literature is devoted to the design of such couplings for various physical situations. Notably, the described setting is considered in \cite{hazard1996solution}, \cite{hiptmair2006stabilized}, \cite{buffa2003boundary}, \cite{hiptmair2003coupling} and \cite{schulz2019coupled}.

On the one hand, domain based variational methods offer a familiar way of modeling wave propagation in materials whose properties vary in space. The texts \cite{Hiptmair2015}, \cite{hiptmair2002finite}, \cite{assous2018mathematical} and \cite{monk2003finite} are thorough analyses for electromagnetism. Standard references such as \cite{steinbach2007numerical} and \cite{colton2012inverse} introduce the reader to the Helmholtz operator as it appears in acoustic scattering. 

On the other hand, boundary integral equations are capable of describing the behavior of the waves in unbounded homogeneous regions, because they provide valid Cauchy data that can be fed to the representation formula. Their complete derivation and properties can be found in \cite{steinbach2007numerical}, \cite{nedelec2001acoustic}, \cite{kress1989linear} and \cite{mclean2000strongly}. In the following, we consider in particular the \emph{direct} boundary integral equations of the \emph{first kind} detailed in \cite{sauter2010boundary}, \cite{buffa2003galerkin} and \cite{claeys2017first}.

Even if a transmission problem involving a Helmholtz-like operator $\mathsf{P} -\lambda\id$ has a unique solution at a given fixed frequency $\lambda\in\mathbb{C}$, the standard direct first-kind boundary integral equations obtained for the associated exterior problem is haunted by the existence of ``spurious frequencies": the kernel of the Dirichlet-to-Neumann map supplied by the first exterior Calder\'on identity is spanned by the interior Dirichlet $\lambda$-eigenfunctions of $\mathsf{P}$. Similarly, the related Neumann eigenspace corresponds to the kernel of the Neumann-to-Dirichlet map supplied by the second exterior Calder\'on identity. This issue was investigated for the electric field integral equation in \cite{christiansen2004discrete}. Eigenvalues of the Laplacian were studied in \cite{demkowicz1994asymptotic} and \cite{sauter2010boundary} from the perspective of resonant frequencies.

Unsurprisingly, this deficiency of the boundary integral equations carries over to the coupled domain-boundary variational formulations. Its consequences for the symmetric approach to the coupling problem in the context of electromagnetism (classical $\mathbf{E}$--$\mathbf{H}$ formulation) and acoustics (Helmholtz equation) were stated without proof in \cite{buffa2003galerkin} and \cite{hiptmair2006stabilized}, respectively. The development presented below is inspired by the analysis carried out in \cite[Lem. 6.1]{hiptmair2003coupling} and \cite[Prop. 3.1]{schulz2019coupled} for electromagnetism, where equivalence between domain-boundary couplings and associated transmission problems is established based on ideas from \cite[Sec. 4.3]{petersdorff1989boundary} for acoustics.

This article is motivated by our impression that the occurrence and nature of spurious resonances is presumably ``well-known in the community'', but that it is difficult to locate a systematic analysis and rigorous results in literature. We thus give in this essay a unified treatment of a few symmetric domain-boundary variational formulations for the time-harmonic solutions of transmission problems in electromagnetism and acoustics under a common framework. Particular problems are discussed in \Cref{sec: Examples}. Costabel's symmetric approach introduced in \cite{costabel1987symmetric} is generalized to allow for the mixed formulation of the interior problem. The lack of uniqueness due to resonant frequencies is shown to result from the formal structure of the Calder\'on identities. The phenomenon is thus shared by all three couplings under consideration. The kernel of the abstract coupled problem is fully characterized in section \ref{sec:resonant frequencies}.

We point out that, from a theoretical point of view, the post-processing required to recover the scattered waves in the exterior region restores uniqueness of the solutions. Indeed, the kernel of the Dirichlet-to-Neumann map vanishes under the representation formula. In practice however, the poor conditioning of the linear systems of boundary integral equations near so-called resonant frequencies leads to severe impact of round-off errors in computations and to slow convergence of iterative solvers, but while these so-called ``spurious resonant frequencies" generally cause instabilities after discretization that enforce the use of regularization strategies, their mere existence is harmless to the physical validity of the domain-boundary coupling models. This explains why classical coercive symmetric couplings remain nonetheless valuable pilot formulations for Galerkin discretization. We refer to \cite{sauter2010boundary} for an introduction to a classical approach originally suggested by Brakhage and Werner \cite{brakhage1965} to regularize the \textit{indirect} BIEs for the Helmholtz operator. We also point out that a CFIE-type stabilization procedure for the Helmholtz transmission problem is studied in \cite{hiptmair2006stabilized}, where a symmetric coupling stable for all positive frequencies is obtained. However, stabilization techniques are not the focus of this paper.

\section{Formal framework}\label{sec:formal framework}
\subsection{Notation and conventions}
Let $\Omega^-\subset\mathbb{R}^3$ be a bounded  simply connected domain with Lipschitz boundary $\Gamma:=\partial\Omega^-$. We think of $\Omega^-$ as a bounded volume occupied by an inhomogeneous object with a possibly ``rough" surface. Usually, $\Omega^-$ is assumed to be a curvilinear polyhedron. Throughout this work, we use $\Omega$ generically to denote either $\Omega^-$ or $\Omega^+:=\mathbb{R}^3\backslash\overline{\Omega^-}$. Physically, $\Omega^+$ often represents an unbounded homogeneous air region around $\Omega^-$. We let $\bLtwo\bra{\Omega}$ and $\bLtwo\bra{\Gamma}$ denote, respectively, spaces of square-integrable functions over $\Omega$ and $\Gamma$. Whenever it is possible, we use bold letters to differentiate vector-valued quantities from scalars. Capitals are often used to denote fields defined over a volume, while small characters usually refer to functions on $\Gamma$. The space of smooth fields compactly supported in $\Omega$ is written $\mathscr{D}\bra{\Omega}$. The subscript `loc' is used to extend a given function space $V$ to the larger space $V_{\text{loc}}$ comprising all functions $u$ such that $u\psi\in V$ for all $\psi\in\mathscr{D}\bra{\Omega}$. A prime will be used to indicate the dual of a space, e.g. $V'$. Duality parings are written with angular brackets, e.g. $\dpair{\cdot}{\cdot}$, but we also often allow ourselves to substitute integrals for these angular brackets when we want to emphasize $\bLtwo(\Omega)$ and $\bLtwo(\Gamma)$ as pivot spaces or highlight the analogy between the identities introduced in this formal framework and Green's classical formulas. 

We call \emph{weak differential operator matrices} the various linear operators that can be represented by a matrix of partial derivatives. We understand their arrangement in a weak sense. If no particular structure is recognized, then we must accept to define them on the Sobolev space $H^1(\Omega)$. However, in the models we consider in this work, the partial derivatives often sum up to form divergence and curl operators respectively defined on
\begin{subequations}
\begin{align}
\mathbf{H}\bra{\text{div},\Omega} &:=\{\mathbf{U}\in \bLtwo\bra{\Omega}\,\vert\,\text{div}\,\mathbf{U}\in \bLtwo\bra{\Omega}\},\\
\mathbf{H}\bra{\mathbf{curl},\Omega} &:=\{\mathbf{U}\in \bLtwo\bra{\Omega}\,\vert\,\mathbf{curl}\,\mathbf{U}\in \bLtwo\bra{\Omega}\}.
\end{align} 
\end{subequations}
The following Green's formulas can be extended to these spaces:
\begin{subequations}
\begin{gather}
\pm\int_{\Omega^{\mp}} \text{div}(\mathbf{U})\, P + \mathbf{U}\cdot\nabla P \, \dif \mathbf{x} = \int_{\Gamma} P\bra{\mathbf{U}\cdot\mathbf{n}}\label{eq:IBP div}\dif \sigma,\\
\pm\int_{\Omega^\mp}\mathbf{U}\cdot\mathbf{curl\,}(\mathbf{V}) -\mathbf{curl\,}(\mathbf{U})\cdot\mathbf{V}\dif \mathbf{x}= \int _{\Gamma}\mathbf{V} \cdot \bra{\mathbf{U}\times \mathbf{n}}\dif \sigma\label{eq:IBP curl}.
\end{gather}
\end{subequations}
Here and in the remainder of the paper, $\mathbf{n}(\mathbf{x})$ stands for the unit normal boundary vector field oriented outward from $\Omega^-$. The same notation is kept throughout this section.

\subsection{Boundary value problems}
We consider a formally self-adjoint linear weak differential operator matrix 
\begin{equation}
\mathsf{P}: \domXloc{\Omega}:=\domXloc{\mathsf{P},\Omega}\rightarrow\bLtwoloc\bra{\Omega}.
\end{equation}
In accordance with this definition, we assume that $\domXloc{\Omega}\subset\bLtwoloc\bra{\Omega}$. Ultimately, our goal is to develop variational transmission equations in which exterior problems of the form
\begin{equation}\label{eq:Helmholtz-like}
\bra{\mathsf{P} -\lambda\,\id}U^{\text{ext}} = 0\quad\text{in }\Omega^+
\end{equation}
for some fixed $\lambda\in\mathbb{C}$ are formulated using BIEs. What we have in mind for $\mathsf{P}$ is a range of important operators. A few examples that arise in the study of acoustic and electromagnetic scattering will be presented in \Cref{sec: Examples}.

The first step in the formulation of BVPs for $\mathsf{P}$ is to establish a definition  of boundary data. In the following, square brackets indicate the jump $\left[\mathsf{T}_{\bullet}\right]:=\mathsf{T}^-_{\bullet}- \mathsf{T}^+_{\bullet}$ of a trace, specified by $\bullet=D$ or $N$, over the boundary $\Gamma$. Let $\domXlocD{\Omega}$ and $\domXlocN{\Omega}$ be two subspaces of $\mathbf{L}^2(\Omega)$ such that
\begin{equation}
\domXloc{\Omega}\subset \domXlocD{\Omega}\cap \domXlocN{\Omega}.
\end{equation}

The next assumption is motivated by \cite[Sec. 3 \& 4]{mclean2000strongly}, \cite[Sec. 2]{buffa2003galerkin} and \cite[Sec. 3.1]{claeys2017first}, among others.

\begin{assumption}[Existence of traces and Green's second formula]\label{assum:traces}
 There exist two non-trivial Hilbert trace spaces of distributions $\TspaceN$ and $\TspaceD$ supported on $\Gamma$ that are dual under a pairing $\dpair{\cdot}{\cdot}_{\Gamma}$, together with continuous and \emph{surjective} linear operators
\begin{align}
\TD^\mp:\, \mathbf{X}_{\emph{loc}}^D(\Omega)\rightarrow \TspaceD,
&&\TN^\mp:\, \mathbf{X}_{\emph{loc}}^N(\Omega)\rightarrow \TspaceN, 
\end{align}
\emph{admitting continuous right inverses} and satisfying Green's second formula
\begin{equation}\label{eq:Green second formula}
\int_{\Omega^{\mp}}\mathsf{P}U\cdot V - U\cdot\mathsf{P}V\dif\mathbf{x}  = \pm\dpair{\mathsf{T}^{\mp}_N U}{\mathsf{T}^{\mp}_D V}_{\Gamma} \mp \dpair{\mathsf{T}^{\mp}_N V}{{\mathsf{T}^{\mp}_D U}_{\Gamma}}_{\Gamma},\qquad \forall\,U,V\in\mathbf{X}(\Omega^\mp).
\end{equation}
We suppose that $\mathscr{D}(\Omega^{\mp})\subset\ker(\TD^{\mp})\cap\ker(\TN^{\mp})$. Moreover, we take for granted that $\left[\TD\bra{\phi}\right]=\left[\TN\bra{\phi}\right]= 0$ whenever $\phi$ is smooth in a neighborhood of $\Gamma$.
\end{assumption}

The archetypes behind these operators are the Dirichlet and Neumann traces, but all the traces occurring in the examples presented in this paper also satisfy \Cref{assum:traces}.

\begin{remark}
Roughly speaking, the hypothesis that $\mathscr{D}\bra{\Omega}\subset\ker(\TD^{\mp})\cap\ker(\TN^{\mp})$ simply asks for the traces of functions vanishing on the boundary to vanish.
\end{remark}

 Given boundary data $g\in\TspaceD$ and $\eta\in\TspaceN$, we use the traces supplied in Assumption \ref{assum:traces} to impose boundary conditions in the statement of interior and exterior BVPs:
 \begin{subequations}
 \begin{align}
 \begin{cases}\label{eq:DP}
 \mathsf{P} U - \lambda U= 0, &\text{in }\Omega^{\mp}\\
 \mathsf{T}^{\mp}_D U = g, &\text{on }\Gamma,\\
 \text{radiation conditions at }\infty, &\text{if }\Omega=\Omega^+
 \end{cases}
 &&(\text{\emph{abstract} Dirichlet BVPs for }\mathsf{P})
 \end{align}
  \begin{align}\label{eq:NP}
 \begin{cases}
 \mathsf{P} U - \lambda U= 0, &\text{in }\Omega^{\mp}\\
 \mathsf{T}^{\mp}_N U = \eta, &\text{on }\Gamma,\\
 \text{radiation conditions at }\infty, &\text{if }\Omega=\Omega^+.
 \end{cases}
  && (\text{\emph{abstract} Neumann BVPs for }\mathsf{P})
 \end{align}
 \end{subequations}
 \begin{assumption}[Uniqueness for exterior BVPs]\label{assum:unique sol to exterior problems}
 	The solutions to the \emph{exterior} (abstract) Dirichlet and Neumann BVPs \eqref{eq:DP} and \eqref{eq:NP} posed on $\domX{\Omega^+}$ are unique.
 \end{assumption}
 
 See \cite[Thm. 9.11]{mclean2000strongly}, \cite[Thm. 6.10]{colton2012inverse}, \cite{hazard1996solution} and \cite[Cor. 3.9]{claeys2017first}.
 
 \subsection{Transmission problems}
 Let $\mathsf{P}$ be defined on $\mathbf{X}(\mathsf{P},\Omega^{\mp})\subset \mathbf{X}_{\mathsf{P}}^D(\Omega^{\mp})\cap \mathbf{X}_{\mathsf{P}}^N(\Omega^{\mp})$ such that it satisfies assumptions \ref{assum:traces} and \ref{assum:unique sol to exterior problems} for continuous and surjective traces $\mathsf{T}_{\mathsf{P},D}^{\mp}:\mathbf{X}_{\mathsf{P}}^D(\Omega^{\mp})\rightarrow \TspaceD\bra{\Gamma}$ and $\mathsf{T}_{\mathsf{P},N}^{\mp}:\mathbf{X}_{\mathsf{P}}^N(\Omega^{\mp})\rightarrow \TspaceN\bra{\Gamma}$. Further suppose that $\mathsf{L}$ is a linear differential operator defined on $\mathbf{X}(\mathsf{L},\Omega^-)\subset \mathbf{X}_{\mathsf{L}}^D(\Omega^{\mp})\cap \mathbf{X}_{\mathsf{L}}^N(\Omega^{\mp})$ that satisfies \Cref{assum:traces} for the traces $\mathsf{T}_{\mathsf{L},D}^-:\mathbf{X}_{\mathsf{L}}^D(\Omega^{\mp})\rightarrow \TspaceD(\Gamma)$ and $\mathsf{T}_{\mathsf{L},N}^-:\mathbf{X}_{\mathsf{L}}^N(\Omega^{\mp})\rightarrow \TspaceD(\Gamma)$. Notice that the trace spaces associated with the two operators are required to correspond.

We are interested in well-posed transmission problems of the form: given a source term $f\in \mathbf{L}^2(\Omega^-)$ and boundary data $\left(g,\eta\right)\in\TspaceD\times\TspaceN$, find $(U,U^{\text{ext}})\in \mathbf{X}(\mathsf{L},\Omega^-)\times \mathbf{X}(\mathsf{P},\Omega^+)$ satisfying
\begin{align}
\begin{cases}\label{eq:TP}
\mathsf{L} U= f, &\text{in }\Omega^{-}\\
\mathsf{P} U^{\text{ext}} - \lambda U^{\text{ext}}= 0, &\text{in }\Omega^{+}\\
\mathsf{T}_{\mathsf{L},D}^- U = \mathsf{T}_{\mathsf{P},D}^+U^{\text{ext}}+g, &\text{on }\Gamma,\\
\mathsf{T}_{\mathsf{L},N}^- U = \mathsf{T}_{\mathsf{P},N}^+U^{\text{ext}}+\eta, &\text{on }\Gamma,\\
\text{radiation conditions at }\infty,
\end{cases}
&&  && (\text{\emph{abstract} transmission problem})
\end{align}
cf. \cite[Eq. 2]{hiptmair2006stabilized}, \cite[Eq. 1.1]{hiptmair2003coupling}, \cite[Eq. 3-4]{schulz2019coupled}, \cite[Sec. 2]{hazard1996solution} and related literature. 

The operator $\mathsf{L}$ models propagation of waves inside the object $\Omega^-$. The later phenomenon can be described using different formulations, thus we emphasize that vector-valued functions $U\in\mathbf{X}(\mathsf{L},\Omega^-)$ need \emph{not} have the same number of entries as vector-valued functions $U^{\text{ext}}\in\mathbf{X}_{\text{loc}}(\mathsf{P},\Omega^{+})$. In other words, the number of unknowns in the interior problem may differ from the number of unknowns in the exterior problem. For instance, this occurs with mixed formulations, in which auxiliary variables increase the dimensionality of the system of equations. Nevertheless, the transmission problem \eqref{eq:TP} covers the important and common case where $\mathsf{L}=\mathsf{P}-\lambda\text{Id}$. Intuitively, it is a good heuristic to think of $\mathsf{L}$ as ``the operator $\mathsf{P}$ in which the spacial coefficients may vary in space''. See \Cref{fig: transmission problem}.

\begin{figure}
\begin{tikzpicture}
\filldraw[color=black, fill=black!10, very thick,dashed] (2,-1) rectangle (7,2);
% nodes
\node (A) at (0.7, 0.3) {$\mathsf{T}^+_{\mathsf{P},N}U^{\text{ext}}$};
\node (B) at (2, 0.3) {};
\node (C) at (3.3, 0.3) {$\mathsf{T}^-_{\mathsf{L},N}U$};

\node (E) at (0.7, -0.3) {$\mathsf{T}^+_{\mathsf{P},D}U^{\text{ext}}$};
\node (F) at (2, -0.3) {};
\node (G) at (3.3, -0.3) {$\mathsf{T}^-_{\mathsf{L},D}U$};

\node (P) at (6,-0.5) {$\mathsf{L}U=F$};
\node (Q) at (6.3,1.5) {$\Omega^-$};
\node (R) at (8.7,-0.5) {$\mathsf{P}U^{\text{ext}}-\lambda U^{\text{ext}}=0$};
\node (Q) at (8.5,1.5) {$\Omega^+=\mathbb{R}^3\backslash\overline{\Omega^-}$};
%\node (S) at (6,-0.5) {$\mathsf{L}U=F$};
\draw [-stealth](A) -- (B);
\draw [-stealth](C) -- (B);
\draw [-stealth](E) -- (F);
\draw [-stealth](G) -- (F);
\end{tikzpicture}
\caption{Depiction of the abstract transmission problem \eqref{eq:TP}. The shaded region represents a volume occupied by a scattering object.}
\label{fig: transmission problem}
\end{figure}
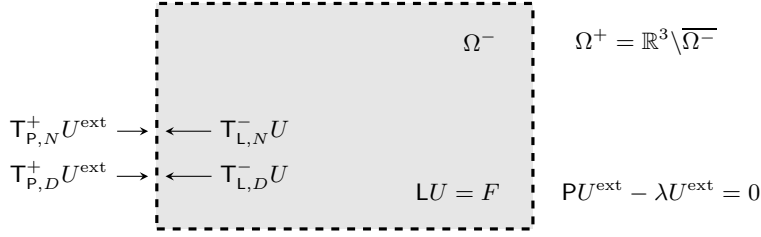

 We refer to \cite{costabel1988boundary}, \cite{monk2003finite}, \cite{buffa2003galerkin}, \cite{claeys2017first} and \cite[Sec. 3]{schulz2019coupled} for the next assumption.

\begin{assumption}[Green's first formula]\label{assum:existence of variatial interior problems}
	There exist a non-trivial subspace $\mathbf{V}\bra{\Omega^-}\subset \mathbf{X}_{\mathsf{L}}^D(\Omega^{-})$ and a continuous  bilinear form $\bm{\Phi}$ on $\mathbf{V}\bra{\Omega^-}\times\mathbf{V}\bra{\Omega^-}$ such that
	\begin{equation}\label{eq:Green first formula}
	\int_{\Omega^-}\mathsf{L}U\cdot V\dif\mathbf{x} = \bm{\Phi}\bra{U,V}+ \dpair{\mathsf{T}_{\mathsf{L},N}^{-}U}{\mathsf{T}_{\mathsf{L},D}^{-} V}_{\Gamma},\qquad\forall\,U\in\domX{\mathsf{L},\Omega^-},V\in\mathbf{V}\bra{\Omega^-}.
	\end{equation}
\end{assumption}

Assumption \ref{assum:existence of variatial interior problems} states that for $g\in \mathbf{H}_D$,
\begin{empheq}[left=\empheqlbrace]{align}
&\text{Seek } U\in \mathbf{V}(\Omega^-)\cap\{\mathsf{T}_{\mathsf{L},D}^-U = g\}\text{ satisfying}\nonumber\\
&\qquad\bm{\Phi}\bra{U,V}= 0\label{eq:IVP} \\
&\text{for all } V\in \mathbf{V}(\Omega^-)\cap\ker\bra{\mathsf{T}_{\mathsf{L},D}^{-}}.\nonumber
\end{empheq}
is a weak variational formulation for the interior Dirichlet problem
\begin{align}
\begin{cases}\label{eq:IP}
\mathsf{L} U = 0, &\text{in }\Omega^{-}\\
\mathsf{T}^{-}_{\mathsf{L},D} U = g, &\text{on }\Gamma.
\end{cases}
&&\qquad\qquad(\text{\emph{abstract} interior Dirichlet BVP for } \mathsf{L} )
\end{align}
By testing with $V\in\mathscr{D}\bra{\Omega^-}$, we immediately find that a solution $U\in\mathbf{V}\bra{\Omega^-}$ of \eqref{eq:IVP} solves $\mathsf{L}U=0$ in the sense of distributions. Therefore, it also solves \eqref{eq:IP} in $\bLtwo\bra{\Omega}$ if it is regular enough. It is necessary and sufficient that $U\in\domX{\mathsf{L},\Omega^-}$. It is thus reasonable to assume the following regularity result.

\begin{assumption}[Regularity]\label{assum:regularity}
	A distribution $U\in\mathbf{V}\bra{\Omega^-}$ which solves $\mathsf{L}U=0$ in the sense of distributions also belongs to $\mathbf{X}\bra{\mathsf{L},\Omega^-}$.
\end{assumption}
This assumption is modeled on the examples below. For e.g., in the simple case where $U\in H^1(\Omega^-)$ is a weak solution of the interior variational problem associated with the scalar Helmholtz operator, then it follows that $\nabla U\in \mathbf{H}\bra{\text{div},\Omega}$, i.e. $U\in H(\Delta,\Omega)$.

\begin{assumption}\label{assum: uniqueness transmission}
	The transmission problem \eqref{eq:TP} is uniquely solvable.
\end{assumption}

In the following sections, we use $\TD:=\mathsf{T}_{\mathsf{P},D}$ and $\TN:=\mathsf{T}_{\mathsf{P},N}$ to ease notation.
 \subsection{Representation by boundary potentials} 
 Given a formally self-adjoint weak differential operator matrix $\mathsf{L}$ and a locally integrable source term $F$, we say that $\mathsf{L} U=F$ holds in $\Omega$ \emph{in the sense of distributions} if 
 \begin{equation}\label{eq:equality in the sense of distributions}
 \dpair{\mathsf{L}U}{V} :=\int_{\Omega} U\cdot\mathsf{L}V\dif\mathbf{x} = \int_{\Omega} F\cdot V\dif\mathbf{x},\qquad \forall \,V\in\mathscr{D}(\Omega). 
 \end{equation}
From this point of view, we have $U,F\in\mathscr{D}\bra{\Omega}'$ and the action of $\mathsf{L}$ is extended by the left hand side of \eqref{eq:equality in the sense of distributions} to be also defined on the space of distributions. That is to say, the solution $U$ is interpreted as a bounded linear functional over the space of test functions.
 
Let $U\in\bLtwoloc\bra{\mathbb{R}^3}$ be such that $U\vert_{\Omega^-}\in\domX{\Omega^-}$ and $U\vert_{\Omega^+}\in\domXloc{\mathsf{P},\Omega^+}$ with $\bra{\mathsf{P} - \lambda\id}U\vert_{\Omega^{\mp}} = 0$, where the restrictions are to be understood in the sense of distributions. Using Green's second formula \eqref{eq:Green second formula} both in $\Omega^-$ and $\Omega^+$ as in \cite[Sec. 4.2]{claeys2017first}, we obtain under \Cref{assum:traces} that
 \begin{equation}
 \langle\mathsf{P}U-\lambda U,\psi\rangle = \dpair{\left[\TD U\right]}{\TN^{-} \psi}_{\Gamma} -\dpair{\TD^{-} \psi}{\left[\TN U\right]}_{\Gamma}
 \end{equation}
for all smooth compactly supported fields $\psi$ defined over $\mathbb{R}^3$. Therefore, in the sense of distributions,
\begin{equation}\label{eq:distribution formula}
\mathsf{P}U - \lambda U = \bra{\TN^-}^*\left[\TD u\right] -\bra{\TD^-}^*\left[\TN u\right],
\end{equation}
where the mappings $\bra{\TN^-}^*$ and $\bra{\TD^-}^*$ are adjoint to $\TN^-$ and $\TD^-$, respectively---to be compared with \cite[Thm. 6.10]{mclean2000strongly}, \cite[Eq. 3.8]{costabel1988boundary}, \cite[Eq. 38]{claeys2017first}.
\iffalse
\begin{align}
&\bra{\TN^-}^*:\mathbf{H}_D\bra{\mathsf{P},\Gamma}\rightarrow \mathbf{X}^D\bra{\Omega^-}'\subset\domX{\Omega^-}',\\ &\bra{\TD^-}^*:\mathbf{H}_N\bra{\Gamma}\rightarrow\mathbf{X}^D\bra{\Omega^-}'\subset\domX{\Omega^-}',
\end{align}
are adjoint to $\TN^-$ and $\TD^-$, respectively.
\fi

Let $\delta_0$ be the Dirac distribution centered at $0$, i.e. $\langle\delta_0,V\rangle=V(0)$ for all $V\in\mathscr{D}(\mathbb{R}^3)$. Recall that convolution by a matrix-valued function $\mathbf{M}$ defined on $\mathbb{R}^3\backslash\{0\}$ is given by
\begin{equation}
    \mathbf{M}\star U = \int_{\mathbb{R}^3}\mathbf{M}(\mathbf{x}-\mathbf{y}) U(\mathbf{y})\dif\mathbf{x}.
\end{equation}
Compare the following assumption with \cite[Sec. 1.1.3]{sauter2010boundary}, \cite[Chap. 5]{steinbach2007numerical}, \cite[Chap. 6]{mclean2000strongly}, \cite[Sec. 4]{buffa2003galerkin} and \cite[4.1]{claeys2017first}.
\begin{assumption}[Fundamental solution]
	There exists a smooth (possibly matrix-valued) complex Green tensor $\mathbf{G}_{\lambda}$ defined over $\mathbb{R}^3\backslash\{0\}$ satisfying 
	\begin{equation}\label{fundamental solution equation}
	   \bra{\mathsf{P}-\lambda\,\emph{Id}}\mathbf{G}_{\lambda}=\delta_0\,\emph{Id}
	   	\end{equation}
	as a distribution and the radiation conditions at infinity stated in \eqref{eq:DP} and \eqref{eq:NP}.
\end{assumption}

Convolution with $\mathbf{G}_{\lambda}$ on both sides of \eqref{eq:distribution formula} using \eqref{fundamental solution equation} yields the representation formula
\begin{equation}\label{eq:representation formula}
U =  \SL\bra{\left[\TN U\right]} + \DL\bra{\left[\TD U\right]},
\end{equation}
where we have defined for all $g\in\TspaceD$ and $\eta\in\TspaceN$ the layer potentials
\begin{align}
\SL(g) := -\mathbf{G}_{\lambda}\star\bra{\bra{\TD^-}^*g},
&&\DL(\eta) := \mathbf{G}_{\lambda}\star\bra{\bra{\TN^-}^*\eta}.
\end{align}

\subsection{Boundary integral operators}
Boundary integral equations for the BVPs \eqref{eq:DP} and \eqref{eq:NP} are obtained by establishing the famous Cald\'eron identities.

The following continuity result and jump relations can be found for all examples to be covered below in \cite[Eq. 5]{hiptmair2006stabilized}, \cite[Chap. 6]{steinbach2007numerical}, \cite[Thm. 7]{buffa2003galerkin} and \cite[Thm. 5.1]{claeys2017first} (beware of the sign in the definition of the jump across $\Gamma$, which may differ from one reference to another).

\begin{assumption}[Jump identities]\label{assum:jumps}
	We assume that the boundary potentials are continuous as mappings
	\begin{align}
	\SL:\TspaceN\rightarrow\mathbf{X}_{\emph{loc}}\bra{\Omega},  &&\DL:\TspaceD\rightarrow\mathbf{X}_{\emph{loc}}\bra{\Omega},
	\end{align}
	and suppose that they satisfy the jump relations
\begin{subequations}
	\begin{align}
	\left[\TD \right]\DL &= \emph{Id},   & \left[\TD \right]\DL &= 0,\\
	\left[\TD \right]\SL &= 0,  & \left[\TN \right]\SL &= \emph{Id}.
	\end{align}
\end{subequations}
\end{assumption}
	
Applying averaged traces $\{\mathsf{T}_{\bullet}\}:=1/2\bra{\mathsf{T}_{\bullet}^+ + \mathsf{T}_{\bullet}^-}$ specified with $\bullet=D$ and $N$ to $\SL$ and $\DL$ yields four continuous boundary integral operators:
\begin{subequations}
\begin{align}
\mathcal{V}_{\lambda} := \{\TD\}\,\SL:\TspaceN\rightarrow\TspaceD,
&&\mathcal{K}^{\dag}_{\lambda} := \{\TN\}\,\SL:\TspaceN\rightarrow\TspaceN,\\
\mathcal{K}_{\lambda} := \{\TD\}\,\DL:\TspaceD\rightarrow\TspaceD,
&&\mathcal{W}_{\lambda} := \{\TN\}\,\DL:\TspaceD\rightarrow\TspaceN.
\end{align}
\end{subequations}
 Taking the traces on both sides of the representation formula \eqref{eq:representation formula} and using the jump relations of \Cref{assum:jumps}, we obtain the interior and exterior Cald\'eron identities
 \begin{subequations}
\begin{align}
\mathsf{P}U-\lambda U\text{ in }\Omega^-&&\implies&&\underbrace{
\begin{pmatrix}
\mathcal{K}_{\lambda}+\frac{1}{2}\id & \mathcal{V}_{\lambda}\\
\mathcal{W}_{\lambda}                & \mathcal{K}^{\dag}_{\lambda}+\frac{1}{2}\id
\end{pmatrix}}_{=:\mathbb{P}_{\lambda}^-}
\begin{pmatrix}
\TD^-U\\
\TN^-U
\end{pmatrix} &=
\begin{pmatrix}
\TD^-U\\
\TN^-U
\end{pmatrix},\\
\mathsf{P}U-\lambda U\text{ in }\Omega^+&&\implies&&\underbrace{
	\begin{pmatrix}
	-\mathcal{K}_{\lambda}+\frac{1}{2}\id & -\mathcal{V}_{\lambda}\\
	-\mathcal{W}_{\lambda}                & -\mathcal{K}^{\dag}_{\lambda}+\frac{1}{2}\id
	\end{pmatrix}}_{=:\mathbb{P}_{\lambda}^+}
\begin{pmatrix}
\TD^+U\\
\TN^+U
\end{pmatrix} &=
\begin{pmatrix}
\TD^+U\\
\TN^+U
\end{pmatrix},
\end{align}
\end{subequations}
respectively, cf. \cite[Sec. 6.6.]{steinbach2007numerical},\cite[Sec. 5]{buffa2003galerkin}, \cite[Sec. 5]{claeys2017first}, \cite[Sec. 2.3]{schulz2019coupled}, \cite[Sec. 3.6]{sauter2010boundary}, \cite[Sec. 4]{hiptmair2006stabilized}. Note that $\mathbb{P}_{\lambda}^+ +\mathbb{P}_{\lambda}^- = \id$ so that the range of $\mathbb{P}_{\lambda}^+$ coincides with the kernel of $\mathbb{P}_{\lambda}^-$ and vice-versa. The next theorem is a consequence of the existence of continuous right inverses for the traces stated in Assumption \ref{assum:traces}. It promotes the Cald\'eron projectors to a pivotal role in domain--boundary formulations of transmission problems.

A pair of boundary functions $(g,\eta)\in \TspaceD\times\TspaceN$ is said to be valid interior/exterior Cauchy data if there exists a distribution $U^{\mp}\in\domXloc{\Omega^{\mp}}$ solving the Dirichlet and Neumann BVPs \eqref{eq:DP} and \eqref{eq:NP} in $\Omega^{\mp}$ such that $\mathsf{T}_{\mathsf{P},D}^\mp U^{\mp}=g$ and $\mathsf{T}_{\mathsf{P},N}^\mp U^{\mp}=\eta$ . We refer to \cite[Thm. 8]{buffa2003galerkin}, \cite[Thm. 3.7]{petersdorff1989boundary} and \cite[Prop. 3.6.2]{sauter2010boundary} for the proof of the next result (cf. \cite[Lem. 6.18]{steinbach2007numerical}, \cite[Prop. 5.2]{claeys2017first} and \cite[Sec. 2.3]{schulz2019coupled}).
\begin{lemma}\label{lemma:calderon characterize cauchy data}
A pair $\left(g,\eta\right)\in\TspaceD\times\TspaceN$ is valid interior or exterior Cauchy data if and only if it lies in the kernel of $\mathbb{P}_{\lambda}^+$ or $\mathbb{P}_{\lambda}^-$, respectively.
\end{lemma}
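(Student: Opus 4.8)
The plan is to prove the interior assertion — that $(g,\eta)\in\TspaceD\times\TspaceN$ is valid interior Cauchy data if and only if $(g,\eta)\in\ker\mathbb{P}_\lambda^+$ — and then to obtain the exterior assertion by the symmetric argument, exchanging $\Omega^-$ with $\Omega^+$, the one-sided traces $\mathsf{T}^\mp$, and the signs in $\mathbb{P}_\lambda^\mp$. Throughout I will use that $\mathbb{P}_\lambda^++\mathbb{P}_\lambda^-=\id$, so that $\ker\mathbb{P}_\lambda^+$ coincides with the set of fixed points of $\mathbb{P}_\lambda^-$, and vice versa.

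For necessity, assume $(g,\eta)$ is valid interior Cauchy data, witnessed by a distribution $U^-$ that solves $\bra{\mathsf{P}-\lambda\id}U^-=0$ in $\Omega^-$ with $\TD^-U^-=g$ and $\TN^-U^-=\eta$. I would extend $U^-$ by zero across $\Gamma$ to a field $U\in\bLtwoloc(\mathbb{R}^3)$. Then $U|_{\Omega^+}\equiv 0$ trivially solves the exterior homogeneous equation together with the radiation conditions, and the hypotheses on the traces in \Cref{assum:traces} give the jumps $[\TD U]=g$ and $[\TN U]=\eta$. Feeding $U$ to the interior Calder\'on identity established above then yields $\mathbb{P}_\lambda^-(g,\eta)=(g,\eta)$; since $\mathbb{P}_\lambda^+=\id-\mathbb{P}_\lambda^-$, this is precisely $(g,\eta)\in\ker\mathbb{P}_\lambda^+$.

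For sufficiency, suppose $(g,\eta)\in\ker\mathbb{P}_\lambda^+$. The natural candidate for a witness is the layer potential $U:=\SL(\eta)+\DL(g)$, which belongs to $\domXloc{\Omega^\mp}$ by \Cref{assum:jumps}. Two points must be verified. First, $U$ solves $\bra{\mathsf{P}-\lambda\id}U=0$ in both $\Omega^-$ and $\Omega^+$ and satisfies the radiation conditions; this follows from the Fundamental Solution assumption exactly as in the derivation of the representation formula, because the densities $\bra{\TD^-}^*g$ and $\bra{\TN^-}^*\eta$ are supported on $\Gamma$ (a consequence of $\mathscr{D}(\Omega^\mp)\subset\ker\TD^\mp\cap\ker\TN^\mp$) and $\bra{\mathsf{P}-\lambda\id}\mathbf{G}_\lambda=\delta_0\id$, so that $\bra{\mathsf{P}-\lambda\id}U$ is a distribution supported on $\Gamma$ and hence vanishes in the open sets $\Omega^\mp$, while the decay at infinity is inherited from $\mathbf{G}_\lambda$. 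Second, I would compute the interior one-sided traces of $U$: combining the jump relations of \Cref{assum:jumps} with the definitions of $\mathcal{V}_\lambda,\mathcal{K}_\lambda,\mathcal{K}_\lambda^\dag,\mathcal{W}_\lambda$ gives $\TD^-\SL=\mathcal{V}_\lambda$, $\TD^-\DL=\mathcal{K}_\lambda+\tfrac12\id$, $\TN^-\SL=\mathcal{K}_\lambda^\dag+\tfrac12\id$ and $\TN^-\DL=\mathcal{W}_\lambda$, so that $(\TD^-U,\TN^-U)=\mathbb{P}_\lambda^-(g,\eta)=\bra{\id-\mathbb{P}_\lambda^+}(g,\eta)=(g,\eta)$. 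Hence $U^-:=U|_{\Omega^-}$ solves the interior Dirichlet and Neumann BVPs \eqref{eq:DP} and \eqref{eq:NP} and exhibits $(g,\eta)$ as interior Cauchy data. (For the exterior assertion one uses instead $U=-\SL(\eta)-\DL(g)$, to accommodate the sign convention $[\,\cdot\,]=\mathsf{T}^--\mathsf{T}^+$ in the jumps.)

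The step I expect to require the most care is the verification in the sufficiency part: justifying rigorously that the layer potentials solve the homogeneous equation on each side of $\Gamma$ — not merely away from the support of their densities — and that they remain in the required space up to $\Gamma$, and then folding the jump relations of \Cref{assum:jumps} (listed there with an evident typo, $[\TD]\DL=0$ where $[\TN]\DL=0$ is meant) into the compact identity $(\TD^-U,\TN^-U)=\mathbb{P}_\lambda^-(g,\eta)$. The surjectivity and continuous right inverses of the traces postulated in \Cref{assum:traces} enter precisely here, guaranteeing that $\SL$, $\DL$ and the four boundary integral operators are defined on all of $\TspaceN$ and $\TspaceD$, so that the above manipulations are legitimate for an arbitrary pair $(g,\eta)$; they likewise underlie the admissibility of the zero-extension used in the necessity part.
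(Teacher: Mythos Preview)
The paper does not supply its own proof of this lemma; it merely refers the reader to \cite[Thm.~8]{buffa2003galerkin}, \cite[Thm.~3.7]{petersdorff1989boundary} and \cite[Prop.~3.6.2]{sauter2010boundary}. Your argument is correct and is exactly the standard proof found in those references: necessity via zero-extension and the representation formula, sufficiency via the layer-potential ansatz $U=\SL(\eta)+\DL(g)$ together with the one-sided trace identities $\TD^\mp=\{\TD\}\pm\tfrac12[\TD]$, $\TN^\mp=\{\TN\}\pm\tfrac12[\TN]$ that collapse to $(\TD^-U,\TN^-U)=\mathbb{P}_\lambda^-(g,\eta)$. Your observation about the typo in \Cref{assum:jumps} is also correct.
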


\subsection{Boundary integral equations}
The rows of the exterior Cald\'eron identities give rise to the following two direct variational BIEs of the first kind for the exterior Dirichlet \eqref{eq:DP} and Neumann \eqref{eq:NP} problems respectively:

\begin{subequations}
\begin{empheq}[left=\empheqlbrace]{align}
&\text{Seek } \xi\in \mathbf{H}_N\bra{\Gamma}\text{ satisfying}\nonumber\\
&\qquad\int_{\Gamma}\mathcal{V}_{\lambda}\xi\cdot\zeta\dif\sigma= -\int_{\Gamma}\bra{\mathcal{K}_{\lambda}+\frac{1}{2}\id}g\cdot\zeta\dif\sigma\label{eq:DBP}\\
&\text{for all } \zeta\in \mathbf{H}_N\bra{\Gamma}.\nonumber
\end{empheq}
\begin{empheq}[left=\empheqlbrace]{align}
&\text{Seek } \xi\in \mathbf{H}_D\bra{\Gamma}\text{ satisfying}\nonumber\\
&\qquad\int_{\Gamma}\mathcal{W}_{\lambda}\xi\cdot\zeta\dif\sigma= -\int_{\Gamma}\bra{\mathcal{K}^{\dag}_{\lambda}+\frac{1}{2}\id}\eta\cdot\zeta\dif\sigma\label{eq:NBP}\\
&\text{for all } \zeta\in \mathbf{H}_D\bra{\Gamma}.\nonumber
\end{empheq}
\end{subequations}

\section{Coupled domain--boundary variational formulations}\label{sec: coupled domain-boundary var}
The idea behind the so-called symmetric approach to marrying domain and boundary variational formulations (originally developed in \cite{costabel1987symmetric} for problems involving linear strongly elliptic differential operators) is to introduce a particularly clever choice of Dirichlet-to-Neumann map 
\begin{equation}
\mathsf{DtN}:\TspaceD\rightarrow\TspaceN
\end{equation}
into Green's first formula---the validity of which, following M. Costabel, we have required in \Cref{assum:existence of variatial interior problems}. Notice that both rows of the exterior Cald\'eron projection $\mathbb{P}_{\lambda}^+$ realize a Dirichlet-to-Neumann map \cite[Sec. 3.7]{sauter2010boundary}, \cite[Sec. 4]{hiptmair2006stabilized}:
\begin{align*}
\mathsf{DtN}_1:=-\mathcal{V}_{\lambda}^{-1}\bra{\mathcal{K}_{\lambda}+\frac{1}{2}\id}:\TspaceD\rightarrow\TspaceN,
&&\mathsf{DtN}_2:=-\bra{\mathcal{K}^{\dag}_{\lambda}+\frac{1}{2}\id}^{-1}\mathcal{W}_{\lambda}:\TspaceD\rightarrow\TspaceN.
\end{align*}
M. Costabel's insight was to combine both rows into the expression
\begin{equation*}
\mathsf{DtN}:=-\mathcal{W}_{\lambda}+\bra{-\mathcal{K}^{\dag}_{\lambda}+\frac{1}{2}\id}\mathsf{DtN}_1.
\end{equation*}

Introducing the transmission conditions into \eqref{eq:Green first formula}, the transmission problem \eqref{eq:TP} can be cast into the operator equation
\begin{equation*}
\bm{\Phi} U + \bra{\mathsf{T}_{\mathsf{L},D}^{-}}^*\bra{\mathsf{DtN}\circ\mathsf{T}_{\mathsf{L},D}^-U} = \mathbf{r.h.s.},
\end{equation*}
where $\bra{\mathsf{T}_{\mathsf{L},D}^{-}}^*:\TspaceN=\TspaceD'\rightarrow\mathbf{X}^D\bra{\mathsf{L},\Omega^-}'$ denotes the adjoint of $\mathsf{T}_{\mathsf{L},D}^{-}$. Here, $\TspaceD'$ and $\TspaceN$ were identified using the duality pairing from \Cref{assum:traces}. Indeed, the details read
\begin{align}
    \mathsf{T}^-_{\mathsf{L},N}U &=  \mathsf{T}_{\mathsf{P},N}^+U^{\text{ext}}+\eta\nonumber\\
    &= -\mathcal{W}_{\lambda}\mathsf{T}^+_{\mathsf{P},D}U^{\text{ext}} + (-\mathcal{K}^{\dag}+\frac{1}{2}\text{Id}) \mathsf{T}_{\mathsf{P},N}^+U^{\text{ext}} + \eta\nonumber\\
    &=-\mathcal{W}_{\lambda}\mathsf{T}^+_{\mathsf{P},D}U^{\text{ext}} + (-\mathcal{K}^{\dag}+\frac{1}{2}\text{Id}) \mathsf{DtN}_1 (\mathsf{T}_{\mathsf{P},D}^+U^{\text{ext}})  + \eta\label{eq: auxiliary var}\\
    &= \mathsf{DtN} (\mathsf{T}_{\mathsf{L},D}^-U) + \eta - \mathsf{DtN}(g). \nonumber
\end{align}

Of course, we dispense with the explicit inverse of $\mathcal{V}_{\lambda}$ by introducing an auxiliary unknown 
\begin{equation}
\xi:=\mathsf{T}_{\mathsf{P},N}^+U^{\text{ext}}= \mathsf{DtN}_1 \mathsf{T}_{\mathsf{P},D}^+U^{\text{ext}}\in\TspaceN
\end{equation}
in \eqref{eq: auxiliary var}, seeking instead a solution pair to the following variational problem.
\begin{empheq}[left=\empheqlbrace]{align}
\text{Seek }\bra{U,\xi}\in \mathbf{V}\bra{\Omega}\times \TspaceN\text{ satisfying }\nonumber\\
\begin{split}
\bm{\Phi}\bra{U,V}+ \dpair{\bra{-\mathcal{K}^{\dag}_{\lambda}+\frac{1}{2}\id} \xi}{\mathsf{T}_{\mathsf{L},D}V}&\\
+\dpair{-\mathcal{W}_{\lambda}\mathsf{T}_{\mathsf{L},D}^-U}{\mathsf{T}_{\mathsf{L},D}V} &= \mathsf{R}_{\mathbf{V}}\bra{v},\\
\dpair{\bra{\mathcal{K}_{\lambda}+\frac{1}{2}\id}\mathsf{T}_{\mathsf{L},D}^-U}{\zeta} + \dpair{\mathcal{V}_{\lambda}\,\xi}{\zeta}&= \mathsf{R_{\mathsf{T}}}\bra{\zeta},
\end{split}\label{eq:CP}\\
\text{for all }\bra{V,\zeta}\in \mathbf{V}\bra{\Omega}\times \TspaceN.\quad\quad\quad\nonumber
\end{empheq}
A few terms were moved to the continuous functionals on the right hand sides. In particular,
\begin{subequations}
\begin{align}
&\mathsf{R}_{\mathbf{V}}\bra{V} := \int_{\Omega^-}f\cdot V\dif\mathbf{x} - \dpair{\eta}{\mathsf{T}_{\mathsf{L},D}V}-\dpair{\mathcal{W}_{\lambda}g}{\mathsf{T}_{\mathsf{L},D}V},\\
&\mathsf{R_{\mathsf{T}}}\bra{\zeta} := \dpair{\bra{\mathcal{K}_{\lambda}+\frac{1}{2}\id}g}{\zeta}.
\end{align}
\end{subequations}

An alternative (arguably simpler) derivation based on the idea of modifying the Johnson-Ned\'{e}lec coupling is presented in \cite{aurada2013classical} and \cite{schulz2019coupled}. In this work, to choice to obtain \eqref{eq:CP} based on \eqref{eq: auxiliary var} is motivated by our intention to share the details behind the derivation available in \cite[Sec. 10]{buffa2003galerkin}.

\section{Resonant frequencies}\label{sec:resonant frequencies}
We call \emph{Dirichlet or Neumann resonant frequency} any eigenvalue in the Dirichlet or Neumann spectrum
\begin{align*}
\Lambda_D\bra{\mathsf{P},\Omega^-}&:=\{\lambda\in\mathbb{C}\vert\, \exists U\in\domX{\mathsf{P},\Omega^-},\, 0\neq U \text{ solving \eqref{eq:DP} in }\Omega^-\text{ with }g=0\},\\
\Lambda_N\bra{\mathsf{P},\Omega^-}&:=\{\lambda\in\mathbb{C}\vert\, \exists U\in\domX{\mathsf{P},\Omega^-},\, 0\neq U \text{ solving \eqref{eq:NP} in }\Omega^-\text{ with }\eta=0\},
\end{align*}
respectively. Given a frequency $\lambda\in\Lambda_D$ or $\Lambda_N$, we denote the $\lambda$-eigenspaces by
\begin{align*}
E^{\lambda}_D\bra{\mathsf{P},\Omega^-} :=\{U\in\domX{\mathsf{P},\Omega^-} &\vert\, U\text{ solving \eqref{eq:DP} in }\Omega^-\text{ with }g=0\},\\
E^{\lambda}_N\bra{\mathsf{P},\Omega^-}:=\{U\in\domX{\mathsf{P},\Omega^-} &\vert\, U\text{ solving \eqref{eq:DP} in }\Omega^-\text{ with }\eta=0\},
\end{align*}
respectively.

\subsection{Kernels of first-kind direct boundary integral equations}
The nontrivial eigenfunctions in $E^{\lambda}_D\bra{\mathsf{P},\Omega^-}$ and $E^{\lambda}_N\bra{\mathsf{P},\Omega^-}$ foil uniqueness of solutions of the boundary integral problems \eqref{eq:DBP} and \eqref{eq:NBP}. The next lemmas completely characterize the kernels of the operators $\mathcal{V}_{\lambda}$ and $\mathcal{W}_{\lambda}$.

\begin{lemma}\label{lem:kernel of A}
$\ker\bra{\mathcal{V}_{\lambda}} = \mathsf{T}^-_{\mathsf{P},N}\bra{E^{\lambda}_{D}\bra{\mathsf{P},\Omega^-}}$
\end{lemma}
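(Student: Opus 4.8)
The plan is to establish the two inclusions separately. The inclusion ``$\supseteq$'' is an immediate consequence of the first row of the interior Cald\'eron identity, while ``$\subseteq$'' rests on representing an element of $\ker\bra{\mathcal{V}_{\lambda}}$ by its own single layer potential and invoking the uniqueness of exterior solutions guaranteed by \Cref{assum:unique sol to exterior problems}.

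For ``$\supseteq$'', I would take $U\in E^{\lambda}_D\bra{\mathsf{P},\Omega^-}$, so that $\bra{\mathsf{P}-\lambda\id}U = 0$ in $\Omega^-$ and $\TD^-U = 0$, and set $\eta:=\TN^-U$. Inserting the Cauchy pair $\bra{\TD^-U,\TN^-U} = \bra{0,\eta}$ into the interior Cald\'eron identity and reading off its first row yields $\mathcal{V}_{\lambda}\eta = 0$, i.e. $\eta\in\ker\bra{\mathcal{V}_{\lambda}}$.

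For ``$\subseteq$'', given $\eta\in\ker\bra{\mathcal{V}_{\lambda}}$ I would consider $W:=\SL\bra{\eta}$. By \Cref{assum:jumps} this field lies in $\domXloc{\Omega}$, and since $\SL\bra{\eta} = -\mathbf{G}_{\lambda}\star\bra{\bra{\TD^-}^*\eta}$ has its defining source supported on $\Gamma$, it solves $\bra{\mathsf{P}-\lambda\id}W = 0$ in both $\Omega^-$ and $\Omega^+$ and satisfies the radiation conditions of \eqref{eq:DP}. Combining the jump relations $[\TD]\SL = 0$ and $[\TN]\SL = \id$ with $\{\TD\}W = \mathcal{V}_{\lambda}\eta = 0$ gives $\TD^-W = \TD^+W = 0$ and $\TN^-W - \TN^+W = \eta$. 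Hence $W\vert_{\Omega^+}$ solves the exterior Dirichlet problem with vanishing boundary datum and the radiation conditions, so $W\vert_{\Omega^+} = 0$ by \Cref{assum:unique sol to exterior problems}, whence $\TN^+W = 0$ and therefore $\TN^-W = \eta$. Since $\Omega^-$ is bounded, $W\vert_{\Omega^-}\in\domX{\mathsf{P},\Omega^-}$, and it solves $\bra{\mathsf{P}-\lambda\id}W = 0$ in $\Omega^-$ with $\TD^-W = 0$; thus $W\vert_{\Omega^-}\in E^{\lambda}_D\bra{\mathsf{P},\Omega^-}$ and $\eta = \TN^-\bra{W\vert_{\Omega^-}}$ lies in $\TN^-\bra{E^{\lambda}_D\bra{\mathsf{P},\Omega^-}}$.

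The argument presents no essential difficulty; the only points requiring care are the bookkeeping of the jump and averaging conventions used to deduce $\TD^{\pm}W = 0$ from $\{\TD\}W = 0$ and $[\TD]W = 0$, and the verification that the restriction of $\SL\bra{\eta}$ to the bounded domain $\Omega^-$ genuinely belongs to $\domX{\mathsf{P},\Omega^-}$, which is exactly what the continuity statement in \Cref{assum:jumps} provides. The conceptual point worth highlighting is that the correct eigenfunction candidate attached to $\eta\in\ker\bra{\mathcal{V}_{\lambda}}$ is the single layer potential of $\eta$ itself; once this is recognized, exterior uniqueness does the remaining work, and the degenerate case $\eta = 0$ is covered trivially since $0\in E^{\lambda}_D\bra{\mathsf{P},\Omega^-}$.
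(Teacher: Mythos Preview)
Your proof is correct. The ``$\supseteq$'' direction is essentially the same as the paper's (reading off the first row of a Cald\'eron identity), but for ``$\subseteq$'' you take a genuinely different route.

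The paper argues purely at the level of the Cald\'eron projectors: from $\mathcal{V}_{\lambda}\xi=0$ it computes $\mathbb{P}^+_{\lambda}\bra{0,\xi}^{\top}=\bra{0,\bra{-\mathcal{K}^{\dag}_{\lambda}+\tfrac{1}{2}\id}\xi}^{\top}$, invokes \Cref{lemma:calderon characterize cauchy data} to recognize this as valid exterior Cauchy data, uses exterior Dirichlet uniqueness to force $\mathcal{K}^{\dag}_{\lambda}\xi=\tfrac{1}{2}\xi$, and then checks that $\bra{0,\xi}$ is a fixed point of $\mathbb{P}^-_{\lambda}$, whence \Cref{lemma:calderon characterize cauchy data} supplies the interior eigenfunction. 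Your argument is more constructive: you exhibit the eigenfunction explicitly as $W=\SL\bra{\eta}$ and use the jump relations together with exterior uniqueness to verify its properties directly. The paper's approach has the advantage of staying entirely within the algebraic structure of the Cald\'eron identities, which is the article's declared theme; it also avoids having to state separately that the layer potentials inherit the radiation conditions from $\mathbf{G}_{\lambda}$ (a point you rely on implicitly but which is not spelled out among the formal assumptions). Your approach, on the other hand, is self-contained and does not require \Cref{lemma:calderon characterize cauchy data} at all, which makes it more portable to settings where that lemma has not been established.
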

\begin{proof}
	$\bra{\supset}$ Suppose that $\lambda\in\Lambda_D$ and let $0\neq U\in E^{\lambda}_D\bra{\mathsf{P},\Omega^-}$. By Lemma \ref{lemma:calderon characterize cauchy data}, the valid Cauchy data $\bra{0,\mathsf{T}^-_{\mathsf{P},N}U}\in \mathbf{H}_D\times\mathbf{H}_N$ for the interior problem is in the kernel of the exterior Cald\'eron projection. The first row of the matrix equation 
	\begin{align}\label{eq:kernel of A}
	\colvec{2}{-\mathcal{V}_{\lambda}\mathsf{T}^-_{\mathsf{P},N}U}{\bra{-\mathcal{K}^{\dag}_{\lambda}+\frac{1}{2}\id}\mathsf{T}^-_{\mathsf{P},N}U}
	=\begin{pmatrix}
	-\mathcal{K}_{\lambda}+\frac{1}{2}\id & -\mathcal{V}_{\lambda}\\
	-\mathcal{W}_{\lambda}                & -\mathcal{K}^{\dag}_{\lambda}+\frac{1}{2}\id
	\end{pmatrix}\colvec{2}{0}{\mathsf{T}^-_{\mathsf{P},N}U}
	=\mathbb{P}^+_{\lambda}\colvec{2}{0}{\mathsf{T}^-_{\mathsf{P},N}U}
	=\begin{pmatrix}
		0\\
		0
	\end{pmatrix}
	\end{align}
	implies that $\mathsf{T}^-_{\mathsf{P},N}U\in\ker\bra{\mathcal{V}_{\lambda}}$.
	
	$\bra{\subset}$ If $\xi\in\mathbf{H}_N$ is such that $\mathcal{V}_{\lambda}\xi = 0$, then 
	$$
	\mathbb{P}^+_{\lambda}
	\begin{pmatrix}
	0\\
	\xi
	\end{pmatrix}
	=
	\begin{pmatrix}
	0\\
	\bra{-\mathcal{K}^{\dag}_{\lambda}+\frac{1}{2}\id}\xi
	\end{pmatrix}.
	$$
	Lemma \ref{lemma:calderon characterize cauchy data} then guarantees that $\bra{0,\bra{-\mathcal{K}^{\dag}_{\lambda}+\frac{1}{2}\id}\xi}^{\top}$ is valid Cauchy data for the exterior boundary value problem \eqref{eq:DP} in $\Omega^+$. By Assumption \ref{assum:unique sol to exterior problems}, the unique solution to the exterior Dirichlet boundary value problem \eqref{eq:DP} with $g=0$ is trivial, so it must be that $\mathcal{K}^{\dag}_{\lambda}\xi=\frac{1}{2}\xi$. Therefore, we find that \begin{equation*}
	\mathbb{P}^-_{\lambda}\colvec{2}{0}{\xi}= \begin{pmatrix}
	\mathcal{K}_{\lambda}+\frac{1}{2}\id & \mathcal{V}_{\lambda}\\
	\mathcal{W}_{\lambda}                & \mathcal{K}^{\dag}_{\lambda}+\frac{1}{2}\id
	\end{pmatrix}\colvec{2}{0}{\xi}
	=\colvec{2}{0}{\xi}.
	\end{equation*}
	We conclude relying on Lemma \ref{lemma:calderon characterize cauchy data} again that there exists $0\neq U\in E^{\lambda}_D\bra{\mathsf{P},\Omega^-}$ with $\mathsf{T}^-_{\mathsf{P},N}U=\xi$.
\end{proof}

Because of the formal symmetry in the structure of the Cald\'eron identities, we also conclude from the above proof that the kernel of $\mathcal{W}_{\lambda}$ is spanned by the Dirichlet traces of the interior Neumann eigenfunctions of $\mathsf{P}$.

\begin{lemma}\label{lem:kernel of D}
$\ker\bra{\mathcal{W}_{\lambda}} = \mathsf{T}^-_{\mathsf{P},D}\bra{E^{\lambda}_{N}\bra{\mathsf{P},\Omega^-}}$
\end{lemma}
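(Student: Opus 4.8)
The plan is to mirror the proof of \Cref{lem:kernel of A} exactly, exploiting the formal symmetry between the two rows of the exterior Calder\'on projector $\mathbb{P}^+_{\lambda}$. The key observation is that swapping the roles $\mathcal{V}_{\lambda}\leftrightarrow\mathcal{W}_{\lambda}$, $\mathcal{K}_{\lambda}\leftrightarrow\mathcal{K}^{\dag}_{\lambda}$, $\TD\leftrightarrow\TN$, and $\TspaceD\leftrightarrow\TspaceN$ turns the first row of the Calder\'on identities into the second and interchanges Dirichlet and Neumann data throughout, so the argument of \Cref{lem:kernel of A} applies verbatim to $\mathcal{W}_{\lambda}$ with Neumann eigenfunctions replacing Dirichlet ones.

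For the inclusion $(\supset)$, I would take $\lambda\in\Lambda_N$ and $0\neq U\in E^{\lambda}_N(\mathsf{P},\Omega^-)$. Then $(\mathsf{T}^-_{\mathsf{P},D}U,0)\in\TspaceD\times\TspaceN$ is valid interior Cauchy data, hence by \Cref{lemma:calderon characterize cauchy data} it lies in $\ker\mathbb{P}^+_{\lambda}$. Writing out $\mathbb{P}^+_{\lambda}(\mathsf{T}^-_{\mathsf{P},D}U,0)^{\top}=(0,0)^{\top}$, the \emph{second} row gives $-\mathcal{W}_{\lambda}\mathsf{T}^-_{\mathsf{P},D}U=0$, i.e. $\mathsf{T}^-_{\mathsf{P},D}U\in\ker\mathcal{W}_{\lambda}$. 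For $(\subset)$, I would take $\xi\in\TspaceD$ with $\mathcal{W}_{\lambda}\xi=0$ and compute $\mathbb{P}^+_{\lambda}(\xi,0)^{\top}=((-\mathcal{K}_{\lambda}+\tfrac12\id)\xi,0)^{\top}$; by \Cref{lemma:calderon characterize cauchy data} the pair $((-\mathcal{K}_{\lambda}+\tfrac12\id)\xi,0)$ is valid exterior Cauchy data for the exterior \emph{Neumann} problem \eqref{eq:NP} with $\eta=0$, which by \Cref{assum:unique sol to exterior problems} forces the exterior solution to be trivial, hence $\mathcal{K}_{\lambda}\xi=\tfrac12\xi$. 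Then $\mathbb{P}^-_{\lambda}(\xi,0)^{\top}=(\xi,0)^{\top}$, so $(\xi,0)\in\ker\mathbb{P}^+_{\lambda}$, and \Cref{lemma:calderon characterize cauchy data} yields an interior solution $U$ of \eqref{eq:NP} with $\eta=0$ and $\mathsf{T}^-_{\mathsf{P},D}U=\xi$; if $\xi\neq 0$ then $U\neq 0$ by the hypothesis $\mathscr{D}(\Omega^-)\subset\ker\TD^-$ (more precisely since the trivial function has trivial Dirichlet trace), so $U\in E^{\lambda}_N(\mathsf{P},\Omega^-)$.

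The only genuine subtlety — and the point I would be most careful about — is the bookkeeping of which row of $\mathbb{P}^+_{\lambda}$ plays the role that the first row played in \Cref{lem:kernel of A}, and correspondingly that the exterior uniqueness invoked is now for the \emph{Neumann} BVP rather than the Dirichlet one. Everything else is a transcription of the previous proof, so rather than repeating the full argument I would simply state the result and remark that it follows from the proof of \Cref{lem:kernel of A} by interchanging the roles of the Dirichlet and Neumann traces, operators, and trace spaces, exactly as anticipated in the sentence preceding the statement.
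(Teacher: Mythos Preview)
Your proposal is correct and is exactly the approach the paper takes: the paper does not give a separate proof of this lemma but simply states, in the sentence immediately preceding it, that the result follows from the proof of \Cref{lem:kernel of A} by the formal symmetry of the Calder\'on identities. Your detailed transcription of that symmetry argument (swapping rows, traces, and invoking exterior Neumann uniqueness in place of Dirichlet) is accurate and matches what the paper intends.
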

The operators on the right-hand sides of the Dirichlet and Neumann variational boundary integral equations \eqref{eq:DBP} and \eqref{eq:NBP} display similar properties.
\begin{lemma}\label{lem:kernel of B}
$\ker\bra{-\mathcal{K}^{\dag}_{\lambda}+\frac{1}{2}\text{\emph{Id}}} = \mathsf{T}^-_{\mathsf{P},N}\bra{E^{\lambda}_{D}\bra{\mathsf{P},\Omega^-}}$
\end{lemma}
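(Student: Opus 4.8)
The plan is to deduce Lemma~\ref{lem:kernel of B} almost entirely from the machinery already assembled for Lemma~\ref{lem:kernel of A}, exploiting that $-\mathcal{K}^{\dag}_{\lambda}+\frac{1}{2}\id$ is precisely the lower–right block of the exterior Calder\'on projector $\mathbb{P}_{\lambda}^+$. I would prove the two inclusions separately, and in fact I expect to establish the stronger fact that $\ker\bra{-\mathcal{K}^{\dag}_{\lambda}+\frac{1}{2}\id}=\ker\bra{\mathcal{V}_{\lambda}}$, after which Lemma~\ref{lem:kernel of A} closes the argument.

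For the inclusion $\mathsf{T}^-_{\mathsf{P},N}\bra{E^{\lambda}_{D}\bra{\mathsf{P},\Omega^-}}\subset\ker\bra{-\mathcal{K}^{\dag}_{\lambda}+\frac{1}{2}\id}$ I would just read off the \emph{second} row of the matrix identity \eqref{eq:kernel of A}: for $0\neq U\in E^{\lambda}_{D}\bra{\mathsf{P},\Omega^-}$ the pair $\bra{0,\mathsf{T}^-_{\mathsf{P},N}U}$ lies in $\ker\mathbb{P}_{\lambda}^+$ by Lemma~\ref{lemma:calderon characterize cauchy data}, and reading the lower entry of $\mathbb{P}_{\lambda}^+\bra{0,\mathsf{T}^-_{\mathsf{P},N}U}^{\top}=0$ gives $\bra{-\mathcal{K}^{\dag}_{\lambda}+\frac{1}{2}\id}\mathsf{T}^-_{\mathsf{P},N}U=0$ directly (the trivial element being in both sides anyway). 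This part is entirely bookkeeping inherited from the proof of Lemma~\ref{lem:kernel of A}.

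For the reverse inclusion, it suffices to show $\ker\bra{-\mathcal{K}^{\dag}_{\lambda}+\frac{1}{2}\id}\subset\ker\bra{\mathcal{V}_{\lambda}}$, since Lemma~\ref{lem:kernel of A} identifies $\ker\bra{\mathcal{V}_{\lambda}}$ with $\mathsf{T}^-_{\mathsf{P},N}\bra{E^{\lambda}_{D}\bra{\mathsf{P},\Omega^-}}$. Let $\xi\in\TspaceN$ with $\mathcal{K}^{\dag}_{\lambda}\xi=\frac{1}{2}\xi$. Then $\mathbb{P}_{\lambda}^+\bra{0,\xi}^{\top}=\bra{-\mathcal{V}_{\lambda}\xi,0}^{\top}$, and since the range of $\mathbb{P}_{\lambda}^+$ coincides with the kernel of $\mathbb{P}_{\lambda}^-$, this pair is valid exterior Cauchy data by Lemma~\ref{lemma:calderon characterize cauchy data}; in particular it is the Cauchy data of a solution $U^+\in\domXloc{\Omega^+}$ of the exterior Neumann BVP \eqref{eq:NP} with $\eta=0$. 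Assumption~\ref{assum:unique sol to exterior problems} forces $U^+=0$, hence its Dirichlet trace $-\mathcal{V}_{\lambda}\xi=\mathsf{T}^+_{\mathsf{P},D}U^+$ vanishes, i.e. $\xi\in\ker\bra{\mathcal{V}_{\lambda}}$. Applying Lemma~\ref{lem:kernel of A} then yields $\xi\in\mathsf{T}^-_{\mathsf{P},N}\bra{E^{\lambda}_{D}\bra{\mathsf{P},\Omega^-}}$, completing the proof.

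The step I would flag as the crux is precisely this last observation: that membership in $\ker\bra{-\mathcal{K}^{\dag}_{\lambda}+\frac{1}{2}\id}$ already collapses $\mathcal{V}_{\lambda}\xi$ to zero, so the two kernels are equal. This is not visible from the block algebra of the Calder\'on identities alone — it genuinely requires routing $\bra{-\mathcal{V}_{\lambda}\xi,0}$ through Lemma~\ref{lemma:calderon characterize cauchy data} and invoking uniqueness for the exterior Neumann problem. (Equivalently, one may phrase the whole argument through the single-layer potential: the jump relations of \Cref{assum:jumps} give $\mathsf{T}^+_{\mathsf{P},N}\SL\xi=\mathcal{K}^{\dag}_{\lambda}\xi-\frac{1}{2}\xi=0$, so $\SL\xi$ vanishes in $\Omega^+$ by exterior uniqueness, forcing $\mathsf{T}^\mp_{\mathsf{P},D}\SL\xi=0$ and exhibiting $\SL\xi|_{\Omega^-}\in E^{\lambda}_{D}\bra{\mathsf{P},\Omega^-}$ with Neumann trace $\xi$.)
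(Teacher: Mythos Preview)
Your proof is correct and follows essentially the same approach as the paper: both arguments read the second row of \eqref{eq:kernel of A} for the inclusion $\mathsf{T}^-_{\mathsf{P},N}\bra{E^{\lambda}_{D}\bra{\mathsf{P},\Omega^-}}\subset\ker\bra{-\mathcal{K}^{\dag}_{\lambda}+\frac{1}{2}\id}$, and for the reverse inclusion both compute $\mathbb{P}_{\lambda}^+\bra{0,\xi}^{\top}=\bra{-\mathcal{V}_{\lambda}\xi,0}^{\top}$, invoke Lemma~\ref{lemma:calderon characterize cauchy data} to recognize this as exterior Cauchy data, use Assumption~\ref{assum:unique sol to exterior problems} for the exterior Neumann problem to force $\mathcal{V}_{\lambda}\xi=0$, and finish with Lemma~\ref{lem:kernel of A}. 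Your explicit framing of the conclusion as $\ker\bra{-\mathcal{K}^{\dag}_{\lambda}+\frac{1}{2}\id}=\ker\bra{\mathcal{V}_{\lambda}}$ and the parenthetical single-layer-potential variant are pleasant additions, but the core argument is the same.
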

\begin{proof}
	$\bra{\subset}$ Suppose that $\lambda\in\Lambda_D$ and let $U\in E^{\lambda}_{D}\bra{\mathsf{P},\Omega^-}$. Using Theorem 1, the valid Cauchy data $\bra{0,\mathsf{T}^-_{\mathsf{P},N} U}\in\mathbf{H}_D\times\mathbf{H}_N$ belongs to the kernel of $\mathbb{P}^+_{\lambda}$. We read from \eqref{eq:kernel of A} that $\mathsf{T}^-_{\mathsf{P},N} U\in\ker\bra{-\mathcal{K}^{\dag}_{\lambda}+\frac{1}{2}\id}$.
	
	$\bra{\supset}$ If $\bra{-\mathcal{K}^{\dag}_{\lambda}+\frac{1}{2}\id}\xi = 0$, then similarly as in the proof of Lemma \ref{lem:kernel of A},
	$$
	\mathbb{P}^+_{\lambda}
	\begin{pmatrix}
	0\\
	\xi
	\end{pmatrix}
	=
	\begin{pmatrix}
	-\mathcal{V}_{\lambda}\xi\\
	0
	\end{pmatrix},
	$$
	which by Lemma \ref{lemma:calderon characterize cauchy data} shows that $\bra{\mathcal{V}_{\lambda}\xi,0}$ is valid Cauchy data for the exterior boundary value problem.  By Assumption \ref{assum:unique sol to exterior problems}, the unique solution to \eqref{eq:NP} in $\Omega^+$ with $\eta=0$ is trivial, so it must be that $\mathcal{V}_{\lambda}\xi=0$. The conclusion follows from Lemma \ref{lem:kernel of A}.
\end{proof}

The following result shouldn't come as a surprise now.
\begin{lemma}\label{lem:kernel of C}
	$\ker\bra{-\mathcal{K}_{\lambda}+\frac{1}{2}\text{\emph{Id}}} = \mathsf{T}^-_{\mathsf{P},D}\bra{E^{\lambda}_{N}\bra{\mathsf{P},\Omega^-}}$
\end{lemma}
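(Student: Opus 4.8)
The plan is to mirror the proof of \Cref{lem:kernel of B} almost verbatim, exploiting once more the formal symmetry between the two rows of the exterior Calderón projector $\mathbb{P}_{\lambda}^+$. Concretely, for the inclusion $(\supset)$ I would start with $\lambda\in\Lambda_N$ and $U\in E^{\lambda}_N(\mathsf{P},\Omega^-)$, so that $\mathsf{T}^-_{\mathsf{P},N}U=0$. By \Cref{lemma:calderon characterize cauchy data} the interior Cauchy pair $(\mathsf{T}^-_{\mathsf{P},D}U,0)\in\TspaceD\times\TspaceN$ lies in $\ker(\mathbb{P}^+_{\lambda})$. Writing out $\mathbb{P}^+_{\lambda}(\mathsf{T}^-_{\mathsf{P},D}U,0)^\top=0$ componentwise, the bottom row gives $-\mathcal{W}_{\lambda}\mathsf{T}^-_{\mathsf{P},D}U=0$ (consistent with \Cref{lem:kernel of D}) while the top row yields exactly $(-\mathcal{K}_{\lambda}+\tfrac12\id)\mathsf{T}^-_{\mathsf{P},D}U=0$, which is the desired membership in $\ker(-\mathcal{K}_{\lambda}+\tfrac12\id)$.

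For the reverse inclusion $(\subset)$ I would take $\xi\in\TspaceD$ with $(-\mathcal{K}_{\lambda}+\tfrac12\id)\xi=0$ and feed the pair $(\xi,0)^\top$ into $\mathbb{P}^+_{\lambda}$: the first row vanishes by hypothesis and the second row produces $-\mathcal{W}_{\lambda}\xi$, so $\mathbb{P}^+_{\lambda}(\xi,0)^\top=(0,-\mathcal{W}_{\lambda}\xi)^\top$. By \Cref{lemma:calderon characterize cauchy data} the pair $(0,-\mathcal{W}_{\lambda}\xi)$ is then valid exterior Cauchy data; but it corresponds to an exterior solution of the Neumann-type problem with vanishing Dirichlet datum $g=0$ in \eqref{eq:DP}, so \Cref{assum:unique sol to exterior problems} forces that exterior solution to be trivial, whence $\mathcal{W}_{\lambda}\xi=0$. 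One then invokes \Cref{lem:kernel of D} to conclude $\xi\in\mathsf{T}^-_{\mathsf{P},D}(E^{\lambda}_N(\mathsf{P},\Omega^-))$, which closes the loop.

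I do not expect any genuine obstacle; the proof is a routine transcription of the argument for \Cref{lem:kernel of B} with the roles of the two rows of $\mathbb{P}^+_{\lambda}$ (and of $\mathcal{V}_{\lambda}$ versus $\mathcal{W}_{\lambda}$, and of Dirichlet versus Neumann) interchanged. The only points that require a moment's care are (i) keeping the sign bookkeeping in $\mathbb{P}^+_{\lambda}=\bigl(\begin{smallmatrix}-\mathcal{K}_{\lambda}+\frac12\id & -\mathcal{V}_{\lambda}\\ -\mathcal{W}_{\lambda} & -\mathcal{K}^{\dag}_{\lambda}+\frac12\id\end{smallmatrix}\bigr)$ straight when reading off rows, and (ii) making sure that in the $(\subset)$ step the exterior uniqueness one appeals to is the one for \eqref{eq:DP} (Dirichlet, $g=0$), since the computed Cauchy data has vanishing first component. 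Given these, the argument is essentially two lines per inclusion, so I would simply write it out directly rather than belabor it.
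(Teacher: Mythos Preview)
Your proposal is correct and is precisely the argument the paper has in mind: the paper does not spell out a proof for this lemma at all, merely remarking that ``the following result shouldn't come as a surprise now,'' i.e.\ it is obtained from the proof of \Cref{lem:kernel of B} by the same row/role swap you describe. The only cosmetic slip is the phrase ``exterior solution of the Neumann-type problem with vanishing Dirichlet datum''; as you yourself note in point~(ii), the exterior Cauchy pair $(0,-\mathcal{W}_\lambda\xi)$ has vanishing \emph{Dirichlet} component, so the uniqueness you invoke is that of the exterior Dirichlet problem \eqref{eq:DP} with $g=0$---just drop the word ``Neumann-type'' there.
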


\begin{corollary}
	A solution of the Dirichlet variational boundary integral equations \eqref{eq:DBP} is unique if and only if $\lambda\notin\Lambda_D$. 
\end{corollary}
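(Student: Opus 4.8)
The plan is to reduce the uniqueness statement for the variational problem \eqref{eq:DBP} to a statement about $\ker(\mathcal{V}_\lambda)$ and then invoke \Cref{lem:kernel of A}. First I would note that \eqref{eq:DBP} is linear in the unknown $\xi$, so a solution is unique precisely when the homogeneous equation $\dpair{\mathcal{V}_\lambda\xi}{\zeta}_\Gamma=0$ for all $\zeta\in\TspaceN$ forces $\xi=0$. Since $\TspaceN$ and $\TspaceD$ are dual under the non-degenerate pairing $\dpair{\cdot}{\cdot}_\Gamma$ and $\mathcal{V}_\lambda$ takes values in $\TspaceD$, the condition $\dpair{\mathcal{V}_\lambda\xi}{\zeta}_\Gamma=0$ for all $\zeta\in\TspaceN$ is equivalent to $\mathcal{V}_\lambda\xi=0$. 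Therefore uniqueness of \eqref{eq:DBP} is equivalent to $\ker(\mathcal{V}_\lambda)=\{0\}$, which by \Cref{lem:kernel of A} means $\mathsf{T}^-_{\mathsf{P},N}\bra{E^{\lambda}_{D}\bra{\mathsf{P},\Omega^-}}=\{0\}$.

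It then remains to show this is equivalent to $\lambda\notin\Lambda_D$. The implication $\lambda\notin\Lambda_D\Rightarrow\text{uniqueness}$ is immediate: by the definition of $\Lambda_D$ we have $E^{\lambda}_{D}\bra{\mathsf{P},\Omega^-}=\{0\}$, hence its image under $\mathsf{T}^-_{\mathsf{P},N}$ is trivial. For the converse I would argue the contrapositive: given $\lambda\in\Lambda_D$, pick a nonzero $U\in E^{\lambda}_{D}\bra{\mathsf{P},\Omega^-}$ and show that $\mathsf{T}^-_{\mathsf{P},N}U\neq0$, so that $\ker(\mathcal{V}_\lambda)\supseteq\mathsf{T}^-_{\mathsf{P},N}\bra{E^{\lambda}_{D}\bra{\mathsf{P},\Omega^-}}$ is non-trivial. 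This last point---that no nonzero interior Dirichlet eigenfunction can also have vanishing Neumann trace---is the only substantive step and the main obstacle, though a mild one: it is really the assertion that an interior solution of $\bra{\mathsf{P}-\lambda\id}U=0$ is determined by its Cauchy data.

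To carry it out, I would argue by contradiction. Assume $\mathsf{T}^-_{\mathsf{P},N}U=0$; since also $\mathsf{T}^-_{\mathsf{P},D}U=0$, the interior Cauchy data of $U$ vanishes. I would extend $U$ by zero across $\Gamma$, putting $\widetilde U:=U$ in $\Omega^-$ and $\widetilde U:=0$ in $\Omega^+$, and check that $\widetilde U$ satisfies the hypotheses of the representation formula \eqref{eq:representation formula}: it lies in $\bLtwoloc\bra{\mathbb{R}^3}$, its restriction to $\Omega^-$ is $U\in\domX{\Omega^-}$, its restriction to $\Omega^+$ is the trivial field---which belongs to $\domXloc{\mathsf{P},\Omega^+}$, solves $\bra{\mathsf{P}-\lambda\id}\widetilde U=0$ in $\Omega^+$, and satisfies the radiation conditions---while both jumps vanish, $\left[\TD\widetilde U\right]=\mathsf{T}^-_{\mathsf{P},D}U=0$ and $\left[\TN\widetilde U\right]=\mathsf{T}^-_{\mathsf{P},N}U=0$. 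The representation formula then yields $\widetilde U=\SL(0)+\DL(0)=0$, hence $U=\widetilde U|_{\Omega^-}=0$, contradicting $U\neq0$. (One could instead route the zero Cauchy pair through \Cref{lemma:calderon characterize cauchy data} together with \Cref{assum:unique sol to exterior problems}, but the representation formula delivers $U\equiv0$ at once.)

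Finally I would chain the equivalences: uniqueness of \eqref{eq:DBP} $\iff\ker(\mathcal{V}_\lambda)=\{0\}\iff\mathsf{T}^-_{\mathsf{P},N}\bra{E^{\lambda}_{D}\bra{\mathsf{P},\Omega^-}}=\{0\}\iff E^{\lambda}_{D}\bra{\mathsf{P},\Omega^-}=\{0\}\iff\lambda\notin\Lambda_D$, the third equivalence being exactly the injectivity of $\mathsf{T}^-_{\mathsf{P},N}$ on $E^{\lambda}_{D}\bra{\mathsf{P},\Omega^-}$ just established.
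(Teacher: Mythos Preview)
Your argument is correct and follows the route the paper implicitly relies on: the corollary is stated without proof immediately after \Cref{lem:kernel of A}--\Cref{lem:kernel of C}, so the intended reasoning is exactly your reduction of uniqueness in \eqref{eq:DBP} to $\ker(\mathcal{V}_\lambda)=\{0\}$ via the duality of $\TspaceD$ and $\TspaceN$, followed by \Cref{lem:kernel of A}. You correctly isolate and handle the one point the paper leaves tacit---that a nonzero $U\in E^{\lambda}_{D}\bra{\mathsf{P},\Omega^-}$ cannot have $\mathsf{T}^-_{\mathsf{P},N}U=0$---and your zero-extension argument via the representation formula \eqref{eq:representation formula} is the standard and appropriate device here.
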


\begin{corollary}
	A solution to the Neumann variational boundary integral equations  \eqref{eq:NBP} is unique if and only if $\lambda\notin\Lambda_N$.
\end{corollary}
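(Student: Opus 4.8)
The plan is to mirror the argument behind the preceding corollary: reduce uniqueness for \eqref{eq:NBP} to a statement about $\ker\bra{\mathcal{W}_\lambda}$, and then invoke \Cref{lem:kernel of D}. First I would observe that \eqref{eq:NBP} is the variational form of the operator equation $\mathcal{W}_\lambda \xi = -\bra{\mathcal{K}^{\dag}_{\lambda}+\frac{1}{2}\id}\eta$ posed in $\TspaceN$: since the test space is all of $\TspaceD = \TspaceN'$ and the pairing $\langle\cdot,\cdot\rangle_\Gamma$ is nondegenerate, a solution $\xi\in\TspaceD$ of \eqref{eq:NBP} is unique precisely when the homogeneous problem forces $\xi = 0$, i.e. when $\ker\bra{\mathcal{W}_\lambda} = \{0\}$. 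By \Cref{lem:kernel of D}, $\ker\bra{\mathcal{W}_\lambda} = \mathsf{T}^-_{\mathsf{P},D}\bra{E^{\lambda}_N\bra{\mathsf{P},\Omega^-}}$. If $\lambda\notin\Lambda_N$, then $E^{\lambda}_N\bra{\mathsf{P},\Omega^-} = \{0\}$ by the very definition of $\Lambda_N$, hence $\ker\bra{\mathcal{W}_\lambda} = \{0\}$ and uniqueness holds; this disposes of one implication.

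For the converse I would suppose $\lambda\in\Lambda_N$, fix some $0\neq U\in E^{\lambda}_N\bra{\mathsf{P},\Omega^-}$, and aim to show $\mathsf{T}^-_{\mathsf{P},D} U\neq 0$; this makes $\ker\bra{\mathcal{W}_\lambda}$ nontrivial and destroys uniqueness. Arguing by contradiction, if $\mathsf{T}^-_{\mathsf{P},D} U = 0$ then---using that $\mathsf{T}^-_{\mathsf{P},N} U = 0$ since $U$ is a Neumann eigenfunction---both interior Cauchy traces of $U$ vanish. I would then extend $U$ by zero to $\tilde U\in\bLtwoloc\bra{\mathbb{R}^3}$, with $\tilde U = U$ on $\Omega^-$ and $\tilde U = 0$ on $\Omega^+$. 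This $\tilde U$ meets all hypotheses of the representation formula: $\tilde U\vert_{\Omega^-}\in\mathbf{X}\bra{\mathsf{P},\Omega^-}$, $\tilde U\vert_{\Omega^+}=0\in\mathbf{X}_{\text{loc}}\bra{\mathsf{P},\Omega^+}$ and satisfies the radiation conditions trivially, $\bra{\mathsf{P}-\lambda\id}\tilde U = 0$ in $\Omega^{\mp}$, and the jumps are $\left[\mathsf{T}_D\tilde U\right] = \mathsf{T}^-_{\mathsf{P},D} U = 0$ and $\left[\mathsf{T}_N\tilde U\right] = \mathsf{T}^-_{\mathsf{P},N} U = 0$. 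Hence \eqref{eq:representation formula} gives $\tilde U = \SL(0)+\DL(0) = 0$, contradicting $U\neq 0$. Therefore $\mathsf{T}^-_{\mathsf{P},D} U\neq 0$, which finishes the argument.

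The main obstacle---indeed the only substantive step---is exactly this last claim: a nonzero interior Neumann eigenfunction of $\mathsf{P}$ cannot simultaneously carry vanishing Dirichlet trace, equivalently $\mathsf{T}^-_{\mathsf{P},D}$ is injective on $E^{\lambda}_N\bra{\mathsf{P},\Omega^-}$. Without it, \Cref{lem:kernel of D} alone would not preclude $\ker\bra{\mathcal{W}_\lambda}$ from collapsing to $\{0\}$ even when $\lambda\in\Lambda_N$. The reduction to a kernel condition and the appeal to \Cref{lem:kernel of D} are routine. As an alternative to the extension-by-zero trick, one could run the injectivity argument through \Cref{lemma:calderon characterize cauchy data} and the interior Calder\'on projector $\mathbb{P}^-_\lambda$, but invoking \eqref{eq:representation formula} directly is the shortest route.
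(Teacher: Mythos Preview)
Your argument is correct. The paper offers no proof for this corollary, treating it as an immediate consequence of \Cref{lem:kernel of D}; your write-up follows the same route but is more complete. In particular, you correctly identify and resolve the one nontrivial point the paper glosses over: \Cref{lem:kernel of D} gives $\ker\bra{\mathcal{W}_\lambda} = \mathsf{T}^-_{\mathsf{P},D}\bra{E^{\lambda}_N\bra{\mathsf{P},\Omega^-}}$, but the ``only if'' direction of the corollary requires that this image be \emph{nonzero} whenever $E^{\lambda}_N\bra{\mathsf{P},\Omega^-}\neq\{0\}$, i.e.\ that $\mathsf{T}^-_{\mathsf{P},D}$ is injective on the Neumann eigenspace. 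Your extension-by-zero argument via the representation formula \eqref{eq:representation formula} handles this cleanly and stays within the paper's framework (the zero extension is compactly supported, so the convolution step is unproblematic).
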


\subsection{Kernel of the domain-boundary coupled variational formulation}
At this point, we are well equipped to study the kernel of the operator
\begin{equation*}
\highlight{
\mathscr{P} : =
\begin{pmatrix}
\bm{\Phi} - \bra{\mathsf{T}_{\mathsf{L},D}^-}^*\mathcal{W}_{\lambda}\circ\mathsf{T}^-_{\mathsf{L},D} & \bra{\mathsf{T}_{\mathsf{L},D}^-}^*\bra{-\mathcal{K}^{\dag}_{\lambda}+\frac{1}{2}\id}\\
\bra{\mathcal{K}_{\lambda}+\frac{1}{2}\id}\mathsf{T}^{-}_{\mathsf{L},D} & \mathcal{V}_{\lambda}
\end{pmatrix}}
\end{equation*}
arising from the variational problem \eqref{eq:CP}.

\begin{whiteFrameResultBeforeAfter}
\begin{proposition}\label{proposition:characterization}
The following are equivalent.
\begin{enumerate}
\item $\bra{U,\xi}\in\mathbf{V}(\Omega^-)\times\mathbf{H}_N$ is in the kernel of $\mathscr{P}$.
\item The pair $\bra{U,\xi}\in\mathbf{V}(\Omega^-)\times\mathbf{H}_N$ is such that
\begin{itemize} 
	\item $\mathsf{L}U=0$ in the sense of distributions,
	\item $\bra{\mathsf{T}^-_{\mathsf{L},N}U-\xi}\in\mathsf{T}^-_{\mathsf{P},N}E^{\lambda}_D\bra{\mathsf{P},\Omega^-}$,
	\item $\bra{\mathsf{T}^-_{\mathsf{L},D}U,\mathsf{T}^-_{\mathsf{L},N}U}$ is valid Cauchy data in $\Omega^+$.
\end{itemize}
\end{enumerate}
\end{proposition}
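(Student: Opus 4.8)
The plan is to prove the equivalence by unpacking what it means for $(U,\xi)$ to lie in $\ker(\mathscr{P})$, row by row, and then translating the resulting boundary relations into statements about valid Cauchy data via \Cref{lemma:calderon characterize cauchy data} and \Cref{lem:kernel of A}. First I would write out $\mathscr{P}(U,\xi)=0$ explicitly as the homogeneous version of \eqref{eq:CP}, namely
\begin{subequations}
\begin{align}
\bm{\Phi}(U,V) - \dpair{\mathcal{W}_{\lambda}\mathsf{T}^-_{\mathsf{L},D}U}{\mathsf{T}^-_{\mathsf{L},D}V}_{\Gamma} + \dpair{\bra{-\mathcal{K}^{\dag}_{\lambda}+\tfrac12\id}\xi}{\mathsf{T}^-_{\mathsf{L},D}V}_{\Gamma} &= 0,\\
\dpair{\bra{\mathcal{K}_{\lambda}+\tfrac12\id}\mathsf{T}^-_{\mathsf{L},D}U}{\zeta}_{\Gamma} + \dpair{\mathcal{V}_{\lambda}\xi}{\zeta}_{\Gamma} &= 0,
\end{align}
\end{subequations}
for all $(V,\zeta)\in\mathbf{V}(\Omega^-)\times\TspaceN$. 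Testing the first equation with $V\in\mathscr{D}(\Omega^-)$ and invoking \Cref{assum:traces} (so that $\mathsf{T}^-_{\mathsf{L},D}V=0$) immediately gives $\bm{\Phi}(U,V)=0$ for all such $V$, hence $\mathsf{L}U=0$ in the sense of distributions; by \Cref{assum:regularity} this upgrades to $U\in\mathbf{X}(\mathsf{L},\Omega^-)$, so Green's first formula \eqref{eq:Green first formula} applies to $U$. Substituting $\int_{\Omega^-}\mathsf{L}U\cdot V = 0 = \bm{\Phi}(U,V)+\dpair{\mathsf{T}^-_{\mathsf{L},N}U}{\mathsf{T}^-_{\mathsf{L},D}V}_{\Gamma}$ back into the first coupled equation eliminates $\bm{\Phi}$ and leaves
\[
\dpair{\bra{-\mathcal{K}^{\dag}_{\lambda}+\tfrac12\id}\xi - \mathcal{W}_{\lambda}\mathsf{T}^-_{\mathsf{L},D}U - \mathsf{T}^-_{\mathsf{L},N}U}{\mathsf{T}^-_{\mathsf{L},D}V}_{\Gamma} = 0 \quad\forall\,V\in\mathbf{V}(\Omega^-).
\]
Since $\mathsf{T}^-_{\mathsf{L},D}$ is surjective onto $\TspaceD$ (the trace hypotheses carry over to $\mathsf{L}$), and $\TspaceD,\TspaceN$ are dual under $\dpair{\cdot}{\cdot}_{\Gamma}$, this forces
\[
\mathsf{T}^-_{\mathsf{L},N}U + \mathcal{W}_{\lambda}\mathsf{T}^-_{\mathsf{L},D}U - \bra{-\mathcal{K}^{\dag}_{\lambda}+\tfrac12\id}\xi = 0 \quad\text{in }\TspaceN,
\]
while the second coupled equation, by the same surjectivity/duality argument over $\TspaceN$, yields $\bra{\mathcal{K}_{\lambda}+\tfrac12\id}\mathsf{T}^-_{\mathsf{L},D}U + \mathcal{V}_{\lambda}\xi = 0$.

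Now I would package these two scalar identities as a single matrix equation. Writing $g_0:=\mathsf{T}^-_{\mathsf{L},D}U$ and $\eta_0:=\mathsf{T}^-_{\mathsf{L},N}U$, the two relations say exactly
\[
\mathbb{P}^+_{\lambda}\colvec{2}{g_0}{\eta_0}
= \begin{pmatrix}
-\mathcal{K}_{\lambda}+\tfrac12\id & -\mathcal{V}_{\lambda}\\
-\mathcal{W}_{\lambda} & -\mathcal{K}^{\dag}_{\lambda}+\tfrac12\id
\end{pmatrix}\colvec{2}{g_0}{\eta_0}
= \begin{pmatrix}
-\mathcal{K}_{\lambda}g_0 + \tfrac12 g_0 - \mathcal{V}_{\lambda}\eta_0\\
-\mathcal{W}_{\lambda}g_0 + \bra{-\mathcal{K}^{\dag}_{\lambda}+\tfrac12\id}\eta_0
\end{pmatrix}.
\]
Subtracting $\eta_0-\xi$-worth of the column $\bra{0,\eta_0-\xi}$ handled by the projector identity — more precisely, using linearity to rewrite the two scalar identities as $\mathbb{P}^+_{\lambda}(g_0,\eta_0)^\top = \mathbb{P}^+_{\lambda}(0,\eta_0-\xi)^\top$ after the obvious cancellations — shows that $(g_0,\eta_0)-(0,\eta_0-\xi)=(g_0,\xi)$ together with the residual lie in complementary pieces; concretely one checks $\mathbb{P}^+_{\lambda}\bra{(g_0,\eta_0)^\top-(0,\eta_0-\xi)^\top} = (g_0,\eta_0)^\top - (0,\eta_0-\xi)^\top$ iff the two derived identities hold. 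By \Cref{lemma:calderon characterize cauchy data} this is equivalent to saying $(g_0,\eta_0)=(\mathsf{T}^-_{\mathsf{L},D}U,\mathsf{T}^-_{\mathsf{L},N}U)$ is valid Cauchy data in $\Omega^+$ together with $(0,\eta_0-\xi)$ being valid interior Cauchy data, i.e. $(0,\mathsf{T}^-_{\mathsf{L},N}U-\xi)\in\ker\mathbb{P}^+_{\lambda}$; by the first row of that kernel condition, $\mathcal{V}_{\lambda}(\mathsf{T}^-_{\mathsf{L},N}U-\xi)=0$, so \Cref{lem:kernel of A} identifies $\mathsf{T}^-_{\mathsf{L},N}U-\xi\in\mathsf{T}^-_{\mathsf{P},N}E^{\lambda}_D(\mathsf{P},\Omega^-)$. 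This is the three-bullet characterization in (2), so $(1)\Rightarrow(2)$. The converse $(2)\Rightarrow(1)$ runs the same computations backwards: if $\mathsf{L}U=0$ distributionally then Green's first formula applies (using \Cref{assum:regularity}), and the two boundary-data facts reassemble into the pair of scalar identities, which when tested against arbitrary $V,\zeta$ reproduce the homogeneous system $\mathscr{P}(U,\xi)=0$.

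The main obstacle I anticipate is the bookkeeping in the middle step: correctly identifying which linear combination of the two derived boundary identities is equivalent to "$(0,\mathsf{T}^-_{\mathsf{L},N}U-\xi)\in\ker\mathbb{P}^+_{\lambda}$" as opposed to merely "$\mathcal{V}_{\lambda}(\mathsf{T}^-_{\mathsf{L},N}U-\xi)=0$," and checking that the second component of $\mathbb{P}^+_{\lambda}(0,\mathsf{T}^-_{\mathsf{L},N}U-\xi)^\top$ automatically vanishes. The resolution is exactly the argument used in the $(\subset)$ direction of \Cref{lem:kernel of A}: once $\mathcal{V}_{\lambda}(\eta_0-\xi)=0$, the pair $\bra{0,(-\mathcal{K}^{\dag}_{\lambda}+\tfrac12\id)(\eta_0-\xi)}$ is valid exterior Cauchy data, \Cref{assum:unique sol to exterior problems} forces it to be trivial so that $(-\mathcal{K}^{\dag}_{\lambda}+\tfrac12\id)(\eta_0-\xi)=0$, and then $(0,\eta_0-\xi)\in\ker\mathbb{P}^+_{\lambda}$ indeed. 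Everything else is linear algebra over the dual pair $(\TspaceD,\TspaceN)$ plus the surjectivity of the $\mathsf{L}$-traces, which is granted by hypothesis.
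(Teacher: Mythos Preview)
Your overall strategy is the paper's own: test against $V\in\mathscr{D}(\Omega^-)$ to obtain $\mathsf{L}U=0$, upgrade via \Cref{assum:regularity}, use Green's first formula \eqref{eq:Green first formula} to reduce the first row to a pure boundary identity, and then read the two resulting identities through the Calder\'on projectors together with \Cref{lem:kernel of A}. The converse is also argued in the paper by reversing these steps.

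There is, however, a concrete algebra slip in the key step. With $g_0:=\mathsf{T}^-_{\mathsf{L},D}U$ and $\eta_0:=\mathsf{T}^-_{\mathsf{L},N}U$, the two boundary identities you derived do \emph{not} say $\mathbb{P}^+_{\lambda}(g_0,\xi)^\top=(g_0,\xi)^\top$. Compute the second row: $-\mathcal{W}_{\lambda}g_0+\bra{-\mathcal{K}^{\dag}_{\lambda}+\tfrac12\id}\xi=\eta_0$, not $\xi$. The correct identity encoded by your two equations is
\[
\mathbb{P}^+_{\lambda}\colvec{2}{g_0}{\xi}=\colvec{2}{g_0}{\eta_0},
\]
which is exactly what the paper writes. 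From this single identity everything you want follows cleanly and your anticipated ``obstacle'' disappears: since $(g_0,\eta_0)^\top$ lies in the range of the idempotent $\mathbb{P}^+_{\lambda}$, it lies in $\ker\mathbb{P}^-_{\lambda}$, so by \Cref{lemma:calderon characterize cauchy data} it is valid exterior Cauchy data; and applying $\mathbb{P}^+_{\lambda}$ once more and subtracting gives $\mathbb{P}^+_{\lambda}(0,\eta_0-\xi)^\top=(g_0,\eta_0)^\top-(g_0,\eta_0)^\top=0$, so $(0,\eta_0-\xi)$ is valid interior Cauchy data in full, not merely in its first row. In particular $\mathcal{V}_{\lambda}(\eta_0-\xi)=0$, and \Cref{lem:kernel of A} yields $\eta_0-\xi\in\mathsf{T}^-_{\mathsf{P},N}E^{\lambda}_D(\mathsf{P},\Omega^-)$. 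No separate appeal to \Cref{assum:unique sol to exterior problems} or to the internal argument of \Cref{lem:kernel of A} is required at this point.
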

\end{whiteFrameResultBeforeAfter}
\begin{proof}
	$\bra{1 \Rightarrow 2}$ Suppose that $\bra{U,\xi}\in\mathbf{V}(\Omega^-)\times\mathbf{H}_N$ is such that for all $V\in\mathbf{V}(\Omega^-)$,
	\begin{align*}
	\bm{\Phi}(U,V) + \dpair{\bra{-\mathcal{K}^{\dag}_{\lambda}+\frac{1}{2}\id} \xi}{\mathsf{T}_{\mathsf{L},D}V}
	+\dpair{-\mathcal{W}_{\lambda}\mathsf{T}^-_{\mathsf{L},D} U}{\mathsf{T}_{\mathsf{L},D}V} &= 0,\\
	\dpair{\bra{\mathcal{K}_{\lambda}+\frac{1}{2}\id}\mathsf{T}^-_{\mathsf{L},D} U}{\zeta} + \dpair{\mathcal{V}_{\lambda}\,\xi}{\zeta}&= 0.
	\end{align*}
	There are three elements that we need to check.
	
	Testing with $V\in\mathscr{D}\bra{\Omega}$, we immediately find that $\mathsf{L}U=0$ holds in the sense of distributions \checkmark. 
	
	Therefore, we can rely on Assumption \ref{assum:regularity} and use the generalized version \eqref{eq:Green first formula} of Green's first formula to obtain
	\begin{subequations}
	\begin{align}
	 \dpair{\bra{-\mathcal{K}^{\dag}_{\lambda}+\frac{1}{2}\id} \xi}{\mathsf{T}_{\mathsf{L},D}^-V}
	+\dpair{-\mathcal{W}_{\lambda}\mathsf{T}^-_{\mathsf{L},D}U}{\mathsf{T}_{\mathsf{L},D}^-V} &= \dpair{\mathsf{T}^-_{\mathsf{L},N} U}{\mathsf{T}_{\mathsf{L},D}^-V},\label{eq:use green first formula}\\
	\dpair{\bra{\mathcal{K}_{\lambda}+\frac{1}{2}\id}\mathsf{T}^-_{\mathsf{L},D}U}{\zeta} + \dpair{\mathcal{V}_{\lambda}\,\xi}{\zeta}&= 0.\label{eq:adding TD on both sides}
	\end{align}
    \end{subequations}
	This allows us to evaluate
	\begin{align*}
	\mathbb{P}^+_{\lambda}
	\begin{pmatrix}
	\mathsf{T}_{\mathsf{L},D}^-U\\
	\xi
	\end{pmatrix}
	&= 		\begin{pmatrix}
	-\mathcal{K}_{\lambda}+\frac{1}{2}\id & -\mathcal{V}_{\lambda}\\
	-\mathcal{W}_{\lambda}                & -\mathcal{K}^{\dag}_{\lambda}+\frac{1}{2}\id
	\end{pmatrix}\begin{pmatrix}
	\mathsf{T}_{\mathsf{L},D}^-U\\
	\xi
	\end{pmatrix}
	=\begin{pmatrix}
	\mathsf{T}_{\mathsf{L},D}^-U\\
	\mathsf{T}_{\mathsf{L},N}^-U
	\end{pmatrix},
	\end{align*}
	where the last equality was obtained by subtracting $\mathsf{T}_{\mathsf{L},D}^-U$ on both sides of \eqref{eq:adding TD on both sides}. Since the range of the exterior Calder\'on projector coincides with the kernel of its interior counterpart, Lemma \ref{lemma:calderon characterize cauchy data} implies that the pair $\bra{\mathsf{T}_{\mathsf{L},D}^-U,\mathsf{T}_{\mathsf{L},N}^-U}\in \mathbf{H}_D\times\mathbf{H}_N$ is valid exterior Cauchy data for \eqref{eq:DP} in $\Omega^+$ \checkmark. 
	
	Moreover, from \eqref{eq:use green first formula} and \eqref{eq:adding TD on both sides}, we know that 
	\begin{align*}
	\mathcal{V}_{\lambda}\mathsf{T}_{\mathsf{L},D}U = - \mathcal{V}_{\lambda}\xi, &&\text{ and }
	&&\mathcal{W}_{\lambda}\mathsf{T}_{\mathsf{L},D}U = \bra{-\mathcal{K}^{\dag}_{\lambda}+\frac{1}{2}\id}\xi - \mathsf{T}_{\mathsf{L},N}U.
	\end{align*}
	Hence,
	\begin{align}
	\begin{split}
	 0 =\mathbb{P}^-_{\lambda}
	 \begin{pmatrix}
	 \mathsf{T}_{\mathsf{L},D}^-U\\
	 \mathsf{T}_{\mathsf{L},N}^-U
	 \end{pmatrix}
	 =\begin{pmatrix}
	 \mathcal{K}_{\lambda}+\frac{1}{2}\id & \mathcal{V}_{\lambda}\\
	 \mathcal{W}_{\lambda}                & \mathcal{K}^{\dag}_{\lambda}+\frac{1}{2}\id
	 \end{pmatrix}
	 \begin{pmatrix}
	 \mathsf{T}_{\mathsf{L},D}^-U\\
	 \mathsf{T}_{\mathsf{L},N}^-U
	 \end{pmatrix}
	 =\begin{pmatrix}
	 \mathcal{V}_{\lambda}\bra{\mathsf{T}_{\mathsf{L},N}^-U-\xi}\\
	\bra{-\mathcal{K}^{\dag}_{\lambda}+\frac{1}{2}\id}\bra{\xi-\mathsf{T}_{\mathsf{L},N}^-U}
	 \end{pmatrix}
	 \end{split}
	\end{align}
	and conclude from Lemma \ref{lem:kernel of A} that $\mathsf{T}_{\mathsf{L},N}^-U-\xi\in\mathsf{T}^-_{\mathsf{P},N}E^{\lambda}_D\bra{\mathsf{P},\Omega^-}$ \checkmark.
	
	$\bra{2 \Rightarrow 1}$ Since $\mathsf{T}^-_{\mathsf{L},N}U-\xi$ is the interior Neumann trace of a Dirichlet $\lambda$-eigenfunction of $\mathsf{P}$, it follows from Lemma \ref{lem:kernel of A} and Lemma \ref{lem:kernel of B} that
    $$
    \mathcal{V}_{\lambda}\mathsf{T}_N^-U = \mathcal{V}_{\lambda}\xi,
	$$
	and
	$$
	\bra{-\mathcal{K}^{\dag}_{\lambda}+\frac{1}{2}\id}\mathsf{T}_N^-U = \bra{-\mathcal{K}^{\dag}_{\lambda}+\frac{1}{2}\id}\xi.
	$$
	Moreover, because $\mathsf{L}U=0$ in the sense of distributions, then Assumption \ref{assum:regularity} guarantees that $U\in\mathbf{X}\bra{\mathsf{L},\Omega^-}$. We can thus integrate by parts using Assumption \ref{assum:existence of variatial interior problems} to verify that $\bm{\Phi}(U,V) = \dpair{-\mathsf{T}_{\mathsf{L},N}^{-}U}{\mathsf{T}^{-}_{\mathsf{L},D} V}_{\Gamma}$ for all $V\in\mathbf{V}\bra{\Omega^-}$.
	
	Therefore,
	\begin{align*}
		\dpair{\mathscr{P}\colvec{2}{U}{\xi}}{\colvec{2}{V}{\zeta}}&= \Big\langle\begin{pmatrix}
			-\mathcal{W}_{\lambda} & -\mathcal{K}^{\dag}_{\lambda}+\frac{1}{2}\id\\
			      \mathcal{K}_{\lambda}+\frac{1}{2}\id  & \mathcal{V}_{\lambda}
		\end{pmatrix}
		\colvec{2}{\mathsf{T}_{\mathsf{L},D}^-U}{\mathsf{T}_{\mathsf{L},N}^-U}-\colvec{2}{\mathsf{T}_{\mathsf{L},N}^-U}{0},\colvec{2}{\mathsf{T}_{\mathsf{L},D}^{-}V}{\zeta}\Big\rangle\\
		&=
		\Big\langle\begin{pmatrix}
			0 & \id\\
			-\id & 0
		\end{pmatrix}
		\mathbb{P}^-_{\lambda}
		\begin{pmatrix}
			\mathsf{T}_{\mathsf{L},D}^-U\\
			\mathsf{T}_{\mathsf{L},N}^-U
		\end{pmatrix}, \colvec{2}{\mathsf{T}_{\mathsf{L},D}^-V}{\zeta}\Big\rangle
	\end{align*}
	\iffalse
	\begin{align*}
	\dpair{\mathscr{P}\colvec{2}{U}{\xi}}{\colvec{2}{V}{\zeta}}&= \Big\langle\begin{pmatrix}
	\mathcal{K}_{\lambda}+\frac{1}{2}\id & \mathcal{V}_{\lambda}\\
	-\mathcal{W}_{\lambda}                & -\mathcal{K}^{\dag}_{\lambda}+\frac{1}{2}\id
	\end{pmatrix}
	\colvec{2}{\mathsf{T}_{\mathsf{L},D}^-U}{\mathsf{T}_{\mathsf{L},N}^-U}-\colvec{2}{0}{\mathsf{T}_{\mathsf{L},N}^-U},\colvec{2}{\mathsf{T}_{\mathsf{L},D}^{-}V}{\zeta}\Big\rangle\\
	&=
	\Big\langle\begin{pmatrix}
	\id & 0\\
	0 & -\id
	\end{pmatrix}
	\mathbb{P}^-_{\lambda}
	\begin{pmatrix}
	\mathsf{T}_{\mathsf{L},D}^-U\\
	\mathsf{T}_{\mathsf{L},N}^-U
	\end{pmatrix}, \colvec{2}{\mathsf{T}_{\mathsf{L},D}^-V}{\zeta}\Big\rangle
	\end{align*}
\fi
	vanishes for all $\bra{V,\zeta}\in \mathbf{V}\bra{\Omega}\times \TspaceN$, since valid exterior Cauchy data for $\mathsf{P}-\lambda\id$ lies in the kernel of the interior Cald\'eron projector. This shows $\bra{U,\xi}\in\ker\bra{\mathscr{P}}$.
\end{proof}

The previous characterization is technical, but it tells us a lot more than what is immediately apparent. It leads to the following main result.

\begin{whiteFrameResultBeforeAfter}
\begin{theorem}\label{thm:uniquess of solutions CP}
	The interior function $U\in\mathbf{V}(\Omega^-)$ of a solution pair $\bra{U,\xi}\in\mathbf{V}(\Omega^-)\times\mathbf{H}_N$ solving the coupled variational problem \eqref{eq:CP} is always unique. If $\lambda\notin\Lambda_{D}$, then the boundary data $\xi$ is also unique. It is otherwise only unique up to adding a boundary function lying in $\mathsf{T}^-_{\mathsf{P},N}(E^{\lambda}_{D}(\mathsf{P},\Omega^-))$. In other words,
	\begin{equation}\label{eq:ker P script}
	\ker\bra{\mathscr{P}} = \{0\}\times \mathsf{T}^-_{\mathsf{P},N}E^{\lambda}_D\bra{\mathsf{P},\Omega^-}.
	\end{equation}
\end{theorem}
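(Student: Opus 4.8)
The plan is to derive everything from \Cref{proposition:characterization}, which has already reduced membership in $\ker(\mathscr{P})$ to the three structural conditions listed there; the only genuinely new ingredient we need is that those conditions force the interior field $U$ to vanish. So suppose $(U,\xi)\in\ker(\mathscr{P})$. By \Cref{proposition:characterization} we have $\mathsf{L}U=0$ in the sense of distributions, which by \Cref{assum:regularity} upgrades to $U\in\mathbf{X}(\mathsf{L},\Omega^-)$, so that $U$ is a bona fide interior solution of $\mathsf{L}U=0$ with vanishing source. Also by \Cref{proposition:characterization}, the pair $(\mathsf{T}^-_{\mathsf{L},D}U,\mathsf{T}^-_{\mathsf{L},N}U)$ is valid exterior Cauchy data for $\mathsf{P}-\lambda\,\id$; by the very definition of valid exterior Cauchy data there exists $U^{\mathrm{ext}}\in\mathbf{X}(\mathsf{P},\Omega^+)$ with $(\mathsf{P}-\lambda\,\id)U^{\mathrm{ext}}=0$ in $\Omega^+$, satisfying the radiation conditions and $\mathsf{T}^+_{\mathsf{P},D}U^{\mathrm{ext}}=\mathsf{T}^-_{\mathsf{L},D}U$, $\mathsf{T}^+_{\mathsf{P},N}U^{\mathrm{ext}}=\mathsf{T}^-_{\mathsf{L},N}U$.

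I would then observe that $(U,U^{\mathrm{ext}})$ solves the abstract transmission problem \eqref{eq:TP} with homogeneous data $f=g=\eta=0$: the interior equation holds because $U$ solves $\mathsf{L}U=0$, the exterior equation holds by construction of $U^{\mathrm{ext}}$, and the two transmission conditions collapse to exactly the trace identities above. \Cref{assum: uniqueness transmission} together with linearity then forces $(U,U^{\mathrm{ext}})=(0,0)$, so in particular $U=0$. This appeal to well-posedness of the transmission problem is the heart of the argument; I do not expect any serious obstacle beyond it, provided the trace spaces of $\mathsf{L}$ and $\mathsf{P}$ are identified consistently so that the exterior Cauchy data can indeed be glued to the interior solution through the transmission conditions — but this compatibility is built into the standing hypotheses on $\mathsf{L}$ and $\mathsf{P}$.

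With $U=0$ established, the second bullet of \Cref{proposition:characterization} reads $-\xi\in\mathsf{T}^-_{\mathsf{P},N}E^{\lambda}_D(\mathsf{P},\Omega^-)$, and since that set is a linear subspace this is the same as $\xi\in\mathsf{T}^-_{\mathsf{P},N}E^{\lambda}_D(\mathsf{P},\Omega^-)$; hence $\ker(\mathscr{P})\subset\{0\}\times\mathsf{T}^-_{\mathsf{P},N}E^{\lambda}_D(\mathsf{P},\Omega^-)$. For the reverse inclusion I would simply verify that any $(0,\xi)$ with $\xi\in\mathsf{T}^-_{\mathsf{P},N}E^{\lambda}_D(\mathsf{P},\Omega^-)$ meets the three conditions of \Cref{proposition:characterization}(2): $\mathsf{L}0=0$ trivially, $\mathsf{T}^-_{\mathsf{L},N}0-\xi=-\xi$ lies in $\mathsf{T}^-_{\mathsf{P},N}E^{\lambda}_D(\mathsf{P},\Omega^-)$, and $(0,0)$ is valid exterior Cauchy data via $U^{\mathrm{ext}}=0$. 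Invoking \Cref{proposition:characterization} once more yields $(0,\xi)\in\ker(\mathscr{P})$, and thus the equality \eqref{eq:ker P script}.

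The remaining assertions then follow by inspection of \eqref{eq:ker P script}: the first component of every element of $\ker(\mathscr{P})$ is $0$, so $U$ is always unique; if $\lambda\notin\Lambda_D$ then $E^{\lambda}_D(\mathsf{P},\Omega^-)=\{0\}$ by the definition of the Dirichlet spectrum, so $\ker(\mathscr{P})=\{(0,0)\}$ and $\xi$ is unique as well, whereas for $\lambda\in\Lambda_D$ the component $\xi$ is pinned down only modulo the finite-dimensional space $\mathsf{T}^-_{\mathsf{P},N}E^{\lambda}_D(\mathsf{P},\Omega^-)$.
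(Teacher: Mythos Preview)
Your proof is correct and follows essentially the same route as the paper: apply \Cref{proposition:characterization} to obtain an interior solution whose traces are valid exterior Cauchy data, build $U^{\mathrm{ext}}$, invoke \Cref{assum: uniqueness transmission} on the resulting homogeneous transmission problem to force $U=0$, and then read off the constraint on $\xi$; the reverse inclusion is handled identically via $(2)\Rightarrow(1)$ of \Cref{proposition:characterization}. The only extraneous remark is calling $\mathsf{T}^-_{\mathsf{P},N}E^{\lambda}_D(\mathsf{P},\Omega^-)$ finite-dimensional, which is not asserted or needed here.
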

\end{whiteFrameResultBeforeAfter}
\begin{proof}
	$\bra{\subset}$ Suppose that the pair $(U,\xi)\in\mathbf{V}(\Omega^-)\times\mathbf{H}_N$ is in the kernel of $\mathscr{P}$. By Proposition \ref{proposition:characterization}, $U\in\mathbf{X}\bra{\mathsf{L},\Omega^-}$ and it solves $\mathsf{L}U = 0$ in the sense of distributions. The lemma also guarantees that the boundary field $(\mathsf{T}^-_{\mathsf{L},D}U,\mathsf{T}^-_{\mathsf{L},N}U)$ is valid Cauchy data for the Dirichlet BVP \eqref{eq:DP} in $\Omega^+$. Thus, $\exists\, U^{\text{ext}}\in \mathbf{X}(\mathsf{P},\Omega^+)$ with  $\mathsf{P} U - \lambda U= 0$ satisfying $\mathsf{T}_{\mathsf{L},D}^- U = \mathsf{T}_{\mathsf{P},D}^+U^{\text{ext}}$ and $\mathsf{T}_{\mathsf{L},N}^- U = \mathsf{T}_{\mathsf{P},N}^+U^{\text{ext}}$.
	
	The pair $(U,U^{\text{ext}})$ solves the transmission problem \eqref{eq:TP} with $g=0$ and $\eta=0$. Therefore, by Assumption \ref{assum: uniqueness transmission}, it can only be the trivial solution.
	
	In particular, $U=0$. Going back to Proposition \ref{proposition:characterization} with this new information, we are left with the assertion (using $(1)\implies (2)$) that $\xi\in\mathsf{T}^-_{\mathsf{P},N}E^{\lambda}_D(\mathsf{P},\Omega^-)$.
	
	$\bra{\supset}$ It follows immediately from $((2)\implies (1))$ in \Cref{proposition:characterization} that $(0,\xi)\in\ker(\mathscr{P})$ for all $\xi\in \mathsf{T}^-_{\mathsf{P},N}E^{\lambda}_D(\mathsf{P},\Omega^-)$, because $(0,0)$ is the valid Cauchy data associated with the trivial solution.
\end{proof}

\subsection{Recovery of field solution in $\Omega^+$}
 In practice, one is less interested by the solution pair $(U,\,\xi)$ of \eqref{eq:CP} than by the actual simulation $(U,\,U^{\text{ext}})$ solving the transmission problem \eqref{eq:TP}. To recover the exterior function $U^{\text{ext}}$, we use the exterior representation formula
\begin{equation}\label{eq:exterior representation formula}
U^{\text{ext}} =  -\SL\bra{\mathsf{T}^+_{\mathsf{P},N} U} - \DL\bra{\mathsf{T}^+_{\mathsf{P},D} U}
\end{equation}
obtained from \eqref{eq:representation formula}. This step was called \emph{post-processing} in the introduction. 

It goes as follows. The right hand side of \eqref{eq:exterior representation formula} defines an operator $\mathfrak{R}:\mathbf{H}_D\times\mathbf{H}_N\rightarrow \mathbf{X}(\mathsf{P},\Omega^+)$ by
\begin{equation*}
\mathfrak{R}\colvec{2}{h}{\zeta}=-\SL\bra{\zeta} - \DL\bra{h}
\end{equation*}
Therefore, given a solution pair $(U,\,\xi)$ solving \eqref{eq:CP}, one retrieves the value of the scattered wave at a location $\mathbf{x}\in\Omega^+$ in the exterior region by computing
\begin{equation}
U^{\text{ext}}\bra{\mathbf{x}} = \mathfrak{R}\colvec{2}{\mathcal{T}_{\mathsf{L},D}U-g}{\xi}\bra{\mathbf{x}}.
\end{equation}

Because \eqref{eq:ker P script} was established in Theorem \ref{thm:uniquess of solutions CP}, we need to verify the following.
\begin{whiteFrameResultBeforeAfter}
\begin{proposition}
	$\{0\}\times \mathsf{T}^-_{\mathsf{P},N}E^{\lambda}_D\bra{\mathsf{P},\Omega^-}\subset \ker\bra{\mathfrak{R}}$
\end{proposition}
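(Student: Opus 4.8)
The plan is to identify $\SL(\xi)$ with the zero extension of a Dirichlet $\lambda$-eigenfunction of $\mathsf{P}$ and to conclude from this that $\SL(\xi)$ must vanish in $\Omega^+$. Since $\mathfrak{R}\bra{0,\xi}=-\SL(\xi)-\DL(0)=-\SL(\xi)$ by linearity of the potentials, it suffices to prove that $\SL(\xi)$ vanishes identically on $\Omega^+$ for every $\xi\in\mathsf{T}^-_{\mathsf{P},N}E^{\lambda}_D\bra{\mathsf{P},\Omega^-}$.

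First I would fix such a $\xi$ and write $\xi=\mathsf{T}^-_{\mathsf{P},N}U$ with $U\in E^{\lambda}_D\bra{\mathsf{P},\Omega^-}$, so that $\mathsf{P}U-\lambda U=0$ in $\Omega^-$ and $\mathsf{T}^-_{\mathsf{P},D}U=0$. I would then introduce the zero extension $\widetilde{U}\in\bLtwoloc\bra{\mathbb{R}^3}$ given by $\widetilde{U}\vert_{\Omega^-}:=U$ and $\widetilde{U}\vert_{\Omega^+}:=0$, and verify that it is an admissible argument for the representation formula \eqref{eq:representation formula}: indeed $\widetilde{U}\vert_{\Omega^-}=U\in\domX{\mathsf{P},\Omega^-}$ solves $\bra{\mathsf{P}-\lambda\,\id}U=0$, while $\widetilde{U}\vert_{\Omega^+}=0$ trivially lies in $\domXloc{\mathsf{P},\Omega^+}$, solves $\bra{\mathsf{P}-\lambda\,\id}\widetilde{U}\vert_{\Omega^+}=0$ in the sense of distributions, and satisfies the radiation conditions at infinity.

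Next I would compute the Cauchy jumps of $\widetilde{U}$ across $\Gamma$. By linearity of the traces of \Cref{assum:traces}, the exterior traces of $\widetilde{U}$ vanish, and combined with $\mathsf{T}^-_{\mathsf{P},D}U=0$ and $\mathsf{T}^-_{\mathsf{P},N}U=\xi$ this gives $\left[\mathsf{T}_D\widetilde{U}\right]=0$ and $\left[\mathsf{T}_N\widetilde{U}\right]=\xi$. Substituting into \eqref{eq:representation formula} then yields $\widetilde{U}=\SL(\xi)+\DL(0)=\SL(\xi)$, and restricting this identity to $\Omega^+$ gives $\SL(\xi)\vert_{\Omega^+}=\widetilde{U}\vert_{\Omega^+}=0$, whence $\bra{0,\xi}\in\ker\bra{\mathfrak{R}}$.

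The whole argument is short and routine, so there is no serious obstacle; the only step that warrants a line of justification is checking that the zero extension qualifies as an argument of the representation formula (membership in $\domXloc{\mathsf{P},\Omega^+}$, the homogeneous equation in $\Omega^+$, and the radiation conditions), all of which are immediate for the zero function. The conceptual point — which is really all that is going on — is that a Dirichlet $\lambda$-eigenfunction extended by zero has vanishing Dirichlet jump and Neumann jump equal to $\xi$, forcing $\SL(\xi)$, which the representation formula reconstructs from exactly these jumps, to be supported in $\overline{\Omega^-}$.
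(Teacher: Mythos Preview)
Your argument is correct, but it takes a genuinely different route from the paper. The paper computes the exterior Dirichlet trace $\TD^+\mathfrak{R}\bra{0,\xi}=-\mathcal{V}_{\lambda}\xi$ via the jump identities of Assumption~\ref{assum:jumps}, invokes Lemma~\ref{lem:kernel of A} to conclude that this trace vanishes, and then appeals to the uniqueness of the exterior Dirichlet problem (Assumption~\ref{assum:unique sol to exterior problems}) to force $\mathfrak{R}\bra{0,\xi}=0$. By contrast, you bypass the boundary integral operators entirely: you recognise the zero extension $\widetilde{U}$ of the Dirichlet eigenfunction as an admissible input for the global representation formula \eqref{eq:representation formula}, read off $\widetilde{U}=\SL(\xi)$ from the jump data, and restrict to $\Omega^+$. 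Your route is more elementary in that it requires neither Lemma~\ref{lem:kernel of A} nor Assumption~\ref{assum:unique sol to exterior problems}; it also yields the stronger information $\SL(\xi)\vert_{\Omega^-}=U$. The paper's route, on the other hand, stays within the boundary-operator machinery already built up and reinforces the theme that everything traces back to the Calder\'on structure.
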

\end{whiteFrameResultBeforeAfter}
\begin{proof}
Let $\xi\in\mathsf{T}^-_{\mathsf{P},N}E^{\lambda}_D(\mathsf{P},\Omega^-)$. Using the jump identities of Assumption \ref{assum:jumps}, we notice that
\begin{align*}
\mathsf{T}^+_{\mathsf{P},D}\,\mathfrak{R}\colvec{2}{0}{\xi} &=-\{\mathsf{T}_{\mathsf{P},D}\}\,\SL\bra{\xi}\\
&= -\mathcal{V}_{\lambda}\bra{\xi}
\end{align*}
vanishes by Lemma \ref{lem:kernel of A}. We conclude that $\mathfrak{R}\colvec{2}{0}{\xi}$ solves \eqref{eq:DP} in $\Omega^+$ with $g=0$. By assumption \ref{assum:unique sol to exterior problems}, this can only occur for $\mathfrak{R}\bra{0\,\,\xi}^{\top}=0$ in $\bLtwo(\Omega^+)$.
\end{proof}
Since $\mathfrak{R}$ is linear, this confirms uniqueness of the pair $\bra{U,\,U^{\text{ext}}}$, and along with it validity of the coupled problem \eqref{eq:CP} as a physical model for electromagnetic and acoustic transmission problems.

\section{Examples}\label{sec: Examples}
We now survey three concrete examples of transmission problems where the above assumptions are met.
\subsection{Acoustics in frequency domain}
The simplest examples of BVPs satisfying these hypotheses are obtained from elliptic operators acting on scalar real-valued functions, of which the Laplacian
\begin{equation*}\label{eq:elliptic operator}
\mathsf{P}:=-\Delta = -\,\text{div}\circ\nabla = -\sum_{i=1}^3\partial^2_i
\end{equation*}
is the most famous one. It acts on a suitably scaled pressure amplitude $U$ in the scalar Helmholtz equation
\begin{equation}\label{eq:Helmholtz}
\mathsf{L}U:=-\,\text{div}\bra{\nabla U} - \kappa^2 r(\mathbf{x}) U = 0
\end{equation}
that models the propagation of plane time harmonic sound waves with real positive wave number $\kappa>0$. While the bounded refractive index $r(\mathbf{x})$  may vary inside the inhomogeneous body $\Omega^-$, it is a constant $r_0\in\mathbb{R}$ in the unbounded air region $\Omega^+$, leading to an exterior problem involving $\mathsf{P}-\lambda$ with $\lambda=\kappa^2r_{0}$. BIEs offer the most flexible way of tackling the exterior problem, but a domain formulation is best suited to deal with the interior inhomogeneity. Because of its simplicity, acoustic scattering thus presents itself as a canonical example to illustrate the relevance of coupled domain--boundary variational formulations.

In this framework, the domain of the Laplace operator is easily seen to be 
\begin{equation}
\mathbf{X}_{\text{loc}}(\Omega):=H_{\text{loc}}\bra{\Delta,\Omega}=\{U\in H_{\text{loc}}^1(\Omega)\vert \,\nabla U\in \mathbf{H}_{\text{loc}}\bra{\text{div},\Omega}\}.
\end{equation}
Boundary value problems are stated using the classical Dirichlet and Neumann traces 
\begin{align*}
&\gamma^{\mp} U\,(x) = \lim_{\Omega^{\mp}\ni y\rightarrow x}U(y),
&&\gamma_n^{\mp}U\,(x) = -\lim_{\Omega^{\mp}\ni y\rightarrow x}\mathbf{n}(x)\cdot\nabla U\,(y),
\end{align*}
 which enter Green's identity \eqref{eq:IBP div}. These traces are well-defined on smooth scalar fields and can be extended continuously to Sobolev spaces:
 \begin{subequations} 
\begin{align}
&\mathsf{T}_{\mathsf{P},D}^\mp:=\gamma^{\mp}:H_{\text{loc}}^1\bra{\Omega^{\mp}}\rightarrow H^{1/2}\bra{\Gamma}=:\mathbf{H}_D,
&&\mathsf{T}_{\mathsf{P},N}^\mp:=\gamma_n^{\mp}: \mathbf{H}_{\text{loc}}\bra{\text{div},\Omega^\mp}\rightarrow H^{-1/2}\bra{\Gamma}=:\mathbf{H}_N.
\end{align}
\end{subequations}

The classical symmetric coupling for \eqref{eq:Helmholtz} derived in \cite{hiptmair2006stabilized} fits the abstract framework of the previous sections with $\mathsf{T}_{\mathsf{L},D}^-=\mathsf{T}_{\mathsf{P},D}^-$, $\mathsf{T}_{\mathsf{L},N}^-=\mathsf{T}_{\mathsf{P},N}^-$ and
\begin{align*}
\bm{\Phi}(U,V):= \int_{\Omega}\nabla U\cdot\nabla V - \kappa^2r(\mathbf{x})\dif\mathbf{x}
\end{align*}
defined on $\mathbf{V}(\Omega^-)\times \mathbf{V}(\Omega^-)$ where $\mathbf{V}(\Omega^-):=H^1(\Omega^-)$.

In this case, the bilinear form on the left-hand side of \eqref{eq:CP} is $H^1(\Omega^-)\times H^{-1/2}(\Gamma)$-coercive \cite[Lem. 5.1]{hiptmair2006stabilized}.

\subsection{$\mathbf{E}$--$\mathbf{H}$ electromagnetism} As explained in the introduction, we also consider the non-elliptic linear operators arising in the simulation of electromagnetic scattering phenomena. A prominent example is the $\mathbf{curl}\,\mathbf{curl}$ operator
\begin{equation}\label{eq:curl curl}
\mathbf{E}\mapsto\mathbf{curl}\bra{\,\mu^{-1}(\mathbf{x})\,\mathbf{curl}\,\mathbf{E}}
\end{equation}
occurring in the frequency domain formulation of the electric wave equation
\begin{equation}\label{eq:electric wave}
\mathsf{L}\mathbf{E}:=\mathbf{curl}\bra{\,\mu^{-1}(\mathbf{x})\,\mathbf{curl}\,\mathbf{E}} - \omega^2\epsilon(\mathbf{x})\mathbf{E} = 0,
\end{equation}
in which  $\epsilon(\mathbf{x})$ and $\mu(\mathbf{x})$ are material properties known, respectively, as the dielectric and permeability tensors. Again, these quantities are assumed constant outside the scatterer, i.e. $\mu(\mathbf{x})=\mu_0$ and $\epsilon(\mathbf{x})=\epsilon_0$ in $\Omega^+$. This is the most standard time-harmonic model for the propagation of an electromagnetic wave with angular frequency $\omega$. As opposed to the Helmholtz equation of acoustic scattering, the unknown is a vector-valued function. 

We note that the $\mathbf{curl}\,\mathbf{curl}$ operator can be represented by the operator matrix
\begin{equation}\label{eq:curlcurl in partial}
\mathsf{P} := 
\begin{pmatrix}
0 &-\partial_3 &\partial_2\\
\partial_3 &0 &-\partial_1\\
-\partial_2 & \partial_1 &0
\end{pmatrix}^2
\end{equation}
involved in the the exterior problem for $\mathsf{P}-\lambda$, where $\lambda:=\omega^2\mu_0\epsilon_0$. Its domain of definition is 
\begin{equation*}
\mathbf{X}_{\text{loc}}(\Omega):=\mathbf{H}_{\text{loc}}(\mathbf{curl}^2,\Omega):=\{\mathbf{E}\in \mathbf{H}_{\text{loc}}(\mathbf{curl}, \Omega) \,\vert\, \mathbf{curl}(\mathbf{E})\in\mathbf{H}_{\text{loc}}(\mathbf{curl}, \Omega)\}.
\end{equation*}
Well-posed boundary value problems are established for the electric wave equations by continuously extending the tangential traces
\begin{subequations}
\begin{align}
&\mathsf{T}_{\mathsf{P},D}^{\mp}:=\gamma^{\mp}_t\,\mathbf{E}(x) := \mathbf{n}(x)\times \gamma_{\tau}\bra{\mathbf{E}(\mathbf{x})},
&&\mathsf{T}_{\mathsf{P},N}^{\mp}:=\gamma_R^{\mp}\,\mathbf{E}(x) := -\gamma^{\mp}_{\tau}\mathbf{curl}\,\mathbf{E}(x)
\end{align}
\end{subequations}
to mappings
\begin{gather}
\mathsf{T}_{\mathsf{P},D}^{\mp}:\mathbf{H}_{\text{loc}}(\mathbf{curl},\Omega^\mp)\rightarrow \mathbf{H}^{-1/2}(\text{curl}_{\Gamma},\Gamma)=:\mathbf{H}_D,\\
\mathsf{T}_{\mathsf{P},L}^{\mp}:\mathbf{H}_{\text{loc}}(\mathbf{curl}^2,\Omega^\mp)\rightarrow \mathbf{H}^{-1/2}(\text{div}_{\Gamma},\Gamma)=:\mathbf{H}_N.
\end{gather}
Note that $\gamma^{\mp}_{\tau}\mathbf{E} := \mathbf{E}\times\mathbf{n}$ enters Green's identity \eqref{eq:IBP curl}. The ``magnetic trace" $\gamma^{\mp}_R\,\mathbf{E}$ plays a role akin to the Neumann trace. The relatively recent development of tangential traces theory for Lipschitz domains can be found in \cite{buffa2001traces_a}, \cite{buffa2001traces_b} and \cite{buffa2002traces}. A symmetric domain-boundary variational coupling for \eqref{eq:electric wave} fitting the framework of this article is performed in \cite{hiptmair2003coupling}. There, the bilinear form
\begin{equation*}
    \bm{\Phi}(\mathbf{E},\mathbf{V}):=\int_{\Omega}\mu^{-1}(\mathbf{x})\,\mathbf{curl}\,\mathbf{E}\cdot\mathbf{curl}\,\mathbf{V} - \omega^2\epsilon(\mathbf{x})\dif\mathbf{x}
\end{equation*}
enters Green's first formula together with the traces 
\begin{align*}
\mathsf{T}_{\mathsf{L},D}^{-} = \mathsf{T}_{\mathsf{P},D}^{-}, &&\mathsf{T}_{\mathsf{L},N}^{-}:=\gamma^{-}_R\,(\mu^{-1}(\mathbf{x})\,\mathbf{E}(x)).
\end{align*}

As proved in \cite{hiptmair2003coupling}, the bilinear form underlying the coupled variational problem \eqref{eq:CP} in this case satisfies a generalized G\r{a}ding inequality (T-coercivity) in $\mathbf{H}(\mathbf{curl},\Omega^-)\times \mathbf{H}^{-1/2}(\text{div}_{\Gamma},\Gamma)$.

\subsection{$\mathbf{A}$-$\phi$ electromagnetism} \label{sec: A-phi electromagnetism}
Equation \eqref{eq:electric wave} is obtained upon combining the dynamical equations
\begin{align*}
&\mathbf{curl}\,\mathbf{E} = -i\omega \mu(x)\mathbf{H},& &\mathbf{curl}\,\mathbf{H} = i\omega\epsilon(\mathbf{x})\mathbf{E},
\end{align*}
that are part of the $\mathbf{E}$--$\mathbf{H}$ formulation of Maxwell's equations. When the magnetic and electric fields are expressed in terms of the vector and scalar electromagnetic potentials, which satisfy $\mathbf{H}=\mu^{-1}(x)\,\mathbf{curl}\,\mathbf{A}$ and $\mathbf{E}=-\partial_t\mathbf{A}-\nabla \phi$, these two equations instead combine to form
\begin{equation*}
\mathbf{curl}\bra{\,\mu^{-1}(\mathbf{x})\,\mathbf{curl}\,\mathbf{A}} + i\omega\epsilon(\mathbf{x})\nabla\phi-\omega^2\epsilon(\mathbf{x})\mathbf{A} = 0.
\end{equation*}
Elimination of $\phi$ using the Lorentz gauge 
\begin{equation}\label{eq:Lorentz gauge}
\text{div}\bra{\epsilon(\mathbf{x})\mathbf{A}}+i\omega\phi = 0
\end{equation}
leads to the Hodge-Helmholtz equation
\begin{equation}\label{eq:Hodge-Helmholtz var}
\mathbf{curl}\bra{\,\mu^{-1}(\mathbf{x})\,\mathbf{curl}\,\mathbf{A}}\\
-\epsilon(\mathbf{x})\nabla\,\text{div}\bra{\epsilon(\mathbf{x})\mathbf{A}}-\omega^2\epsilon(\mathbf{x})\mathbf{A}=0.
\end{equation} 
\begin{remark}
The link between electromagnetism and geometry through the Hodge-Laplace operator is the subject of a vast literature. Because \eqref{eq:Hodge-Helmholtz var} is robust in the low-frequency limit $\omega\rightarrow 0$, its extension to inhomogeneous materials through the generalized Lorentz gauge \eqref{eq:Lorentz gauge} has resurfaced relatively recently as an interesting alternative to the standard electric wave equation for the simulation of some contemporary physical experiments in quantum optics \cite{chew2014vector}.
\end{remark}

When the material properties $\epsilon(\mathbf{x})=\epsilon_0$ and $\mu(\mathbf{x})=\mu_0$ are assumed constant, equation \eqref{eq:Hodge-Helmholtz var} reduces to
\begin{equation}\label{eq:Hodge-Helmholtz const}
\mathsf{P}:=\mathbf{curl}\,\mathbf{curl}\,\mathbf{A}- \eta\,\nabla\,\text{div}\,\mathbf{A} - \kappa^2\mathbf{A}= 0,
\end{equation}
where $\eta = \mu_0\epsilon_0^2$ and $\kappa^2 = \mu_0\epsilon_0\omega^2$. The domain of the so-called Hodge--Helmholtz operator on the left hand side is the intersection space $\mathbf{X}(\Omega^{\mp}):=\mathbf{H}_{\text{loc}}\bra{\mathbf{curl}^2,\Omega^{\mp}}\cap\mathbf{H}_{\text{loc}}\bra{\nabla\text{div},\Omega^{\mp}}$, where 
\begin{equation*}
\mathbf{H}_{\text{loc}}\bra{\nabla\text{div},\Omega}:=\{\mathbf{U}\in\mathbf{H}_{\text{loc}}\bra{\text{div},\Omega}\,\vert \,\text{div}\,\mathbf{U}\in H_{\text{loc}}^1(\Omega)\}.
\end{equation*}
A pair of suitable traces for the formulation of boundary value problems is given by \cite{claeys2017first,claeys2018first}
\begin{subequations}
	\begin{align}
	&\mathsf{T}^{\mp}_{\mathsf{P},N}\mathbf{A}(\mathbf{x}):=\mathcal{T}^{\mp}_\text{mg}\,\mathbf{A}(\mathbf{x})= 
	\begin{pmatrix}
	\gamma_R^{\mp}\,\mathbf{A}(\mathbf{x})\\
	\gamma_n^{\mp}\,\mathbf{A}(\mathbf{x})
	\end{pmatrix}, 
	&&\mathsf{T}^{\mp}_{\mathsf{P},D}\mathbf{A}(\mathbf{x}):=\mathcal{T}^{\mp}_\text{el}\,\mathbf{A}(\mathbf{x})= 
	\begin{pmatrix}
	\gamma_t^{\mp}\,\mathbf{A}(\mathbf{x})\\
	\eta\,\gamma^{\mp}\text{div}\mathbf{A}(\mathbf{x})
	\end{pmatrix}.
	\end{align}
\end{subequations}
Notice that their ranges are product trace spaces. This is partly due to the fact that the $\mathbf{A}$--$\phi$ potential formulation of Maxwell's equations initially introduced two unknowns in the wave equation. Going back to the Lorentz gauge \eqref{eq:Lorentz gauge}, we see in the context of transmission problems that the second component of the ``electric trace" $\mathcal{T}^{\mp}_\text{el}$ is in hiding a continuity condition for the scalar potential.  Once again, it is the ``magnetic trace" $\mathcal{T}^{\mp}_{\text{mg}}$ that resembles the Neumann trace. We refer to \cite{chew2014vector} for more details.

The natural trial and test subspaces of $\mathbf{H}\bra{\text{div},\Omega^-}\cap\mathbf{H}\bra{\mathbf{curl},\Omega^-} $ readily obtained upon establishing domain based variational formulations for \eqref{eq:Hodge-Helmholtz const} using \eqref{eq:IBP div} and \eqref{eq:IBP curl} are unfortunately not viable for discretization by finite elements \cite[Sec. 6.2]{hiptmair2002finite}.  This is the reason why in \cite{schulz2019coupled} the mixed formulation
\begin{align}
\begin{split}\label{eq:mixed formulation}
\mathbf{curl}\,\bra{\,\mu^{-1}(\mathbf{x})\,\mathbf{curl}\,\mathbf{A}}  +\epsilon(\mathbf{x})\nabla P -\omega^2\epsilon(\mathbf{x})\mathbf{A} &= \mathbf{F},\\
-\,\text{div}\,\bra{\epsilon(\mathbf{x})\mathbf{A}} - P &= 0,
\end{split}
\end{align}
is considered. The operator matrix
\begin{equation*}
\mathsf{L}:= 
\begin{pmatrix}
\mathbf{curl}\circ\mu^{-1}(\mathbf{x})\mathbf{curl}-\omega^2\epsilon(\mathbf{x}) & \epsilon(\mathbf{x})\nabla\\
-\,\text{div}(\epsilon(\mathbf{x}) \cdot) & -\,\id
\end{pmatrix}
\end{equation*}
is well defined over $\mathbf{X}(\mathsf{L},\Omega^-):=\mathbf{H}\bra{\mathbf{curl}^2,\Omega^-}\times H^1(\Omega^-)$ and therefore more convenient to model the interior problem. This subtlety justifies generalizing Green's first formula in Assumption \ref{assum:existence of variatial interior problems}, because integration by parts yields
\begin{equation}\label{eq: helmholtz green first mixed}
\int_{\Omega^-}\mathsf{L}\colvec{2}{\mathbf{A}}{P}\cdot{\colvec{2}{\mathbf{V}}{Q}}\dif\mathbf{x}=\bm{\Phi}_{\kappa}\bra{\colvec{2}{\mathbf{A}}{P},\colvec{2}{\mathbf{V}}{Q}} + \Big\langle \mathsf{T}^-_{\mathsf{L},N}\colvec{2}{\mathbf{A}}{P} ,\mathsf{T}^-_{\mathsf{L},D}\colvec{2}{\mathbf{V}}{Q}\Big\rangle, 
\end{equation}
where the bilinear form defined on $\mathbf{V}(\Omega^-)\times \mathbf{V}(\Omega^-)$ with $\mathbf{V}(\Omega^-):=\mathbf{H}(\mathbf{curl},\Omega)\times H^1(\Omega^-)$ is given by
\begin{multline*}\label{eq: sym bilinear form}
\bm{\Phi}_{\kappa}\bra{\colvec{2}{\mathbf{A}}{P},\colvec{2}{\mathbf{V}}{Q}} :=   \int_{\Omega_s}\mu^{-1}\,\mathbf{curl}\,\mathbf{A}\cdot\mathbf{curl}\,\mathbf{V}\dif\mathbf{x} + \int_{\Omega_s}\epsilon\,\nabla P\cdot\mathbf{V}\dif\mathbf{x}
- \int_{\Omega_s}P\,Q\dif\mathbf{x}\\
+ \int_{\Omega_s}\mathbf{A}\cdot \epsilon\nabla Q \dif\mathbf{x} -\omega^2\int_{\Omega_s}\epsilon\,\mathbf{A}\cdot\mathbf{V}\dif\mathbf{x}
\end{multline*}
and the traces are
\begin{align*}
    \mathsf{T}^-_{\mathsf{L},N}\colvec{2}{\mathbf{A}}{P}= 	\begin{pmatrix}
	\gamma_R^-(\mu^{-1}(\mathbf{x})\mathbf{A}(\mathbf{x}))\\
	\gamma_n^-\,(\epsilon(\mathbf{x})\mathbf{A}(\mathbf{x}))
	\end{pmatrix},
	&&
	  \mathsf{T}^-_{\mathsf{L},D}\colvec{2}{\mathbf{V}}{Q}= 	\begin{pmatrix}
	\gamma_t^-\mathbf{V}(\mathbf{x})\\
	-\gamma^-\,Q
	\end{pmatrix}.
\end{align*}

For this formulation, T-coercivity in $\mathbf{H}(\mathbf{curl},\Omega^-)\times H^1(\Omega^-)\times\mathbf{H}^{-1/2}(\text{div}_{\Gamma})\times H^{-1/2}(\Gamma)$ of the bilinear form in \eqref{eq:CP} is established in \cite[Thm. 5.6]{schulz2019coupled}.

\section{Conclusion}
We have abstracted the common characteristics of the three particular problems presented in \Cref{sec: Examples}. As a consequence, Costabel's original symmetric coupling was generalized to allow for a larger class of operators. The issues raised by spurious resonant frequencies were found to be rooted in the formal structure detailed by the framework of section \ref{sec:formal framework}. In section \ref{sec:resonant frequencies}, the consequences of their existence were investigated. In doing so, the kernels of the operators entering the problems \eqref{eq:DBP}, \eqref{eq:NBP} and \eqref{eq:CP} were completely characterized. It was also shown that the Neumann eigenfunctions which thwart the uniqueness of solutions for the coupled problem vanish under the exterior representation formula, thus showing that the complete field solution $U$ remains unique despite the existence of spurious resonance frequencies. The symmetric approach to Symmetric domain-boundary coupling therefore remains a valuable starting point for Galerkin discretization.

\bibliographystyle{plain}
\bibliography{bibliography}
\end{document}